\newcommand{\cat}[1]{\ensuremath{\mathbf{#1}}}
\newcommand{\Z}{\mathbb Z}
\newcommand{\R}{\mathbb R}
\newcommand{\N}{\mathbb N}
\newcommand{\Q}{\mathbb Q}
\newcommand{\T}{\mathbb T}
\newcommand{\F}{\mathbb F}
\newcommand{\Span}{{\operatorname{Span}}}
\newcommand{\trop}{{\operatorname{trop}}}
\newcommand{\gen}[1]{\left\langle#1\right\rangle}
\theoremstyle{definition}
\newtheorem{definition}{Definition}[]
\newcounter{remark}
\newenvironment{remark}{\refstepcounter{remark}\par
   \noindent \textbf{Remark~\theremark.} \rmfamily}{\medskip}
\theoremstyle{plain}
\newtheorem{theorem}{Theorem}[]
\newtheorem{corollary}[theorem]{Corollary}
\newtheorem{lemma}[theorem]{Lemma}
\newtheorem{proposition}[theorem]{Proposition}
\newenvironment{notation}{\par\noindent \textbf{Notation:} \rmfamily}{\medskip}
\newcounter{example}
\newenvironment{example}[1][]{\refstepcounter{example}\par
   \noindent \textbf{Example~\theexample. #1} \rmfamily}{\medskip}
\def\renewtheorem#1{%
  \expandafter\let\csname#1\endcsname\relax
  \expandafter\let\csname c@#1\endcsname\relax
  \gdef\renewtheorem@envname{#1}
  \renewtheorem@secpar
}
\def\renewtheorem@secpar{\@ifnextchar[{\renewtheorem@numberedlike}{\renewtheorem@nonumberedlike}}
\def\renewtheorem@numberedlike[#1]#2{\newtheorem{\renewtheorem@envname}[#1]{#2}}
\def\renewtheorem@nonumberedlike#1{  
\def\renewtheorem@caption{#1}
\edef\renewtheorem@nowithin{\noexpand\newtheorem{\renewtheorem@envname}{\renewtheorem@caption}}
\renewtheorem@thirdpar
}
\def\renewtheorem@thirdpar{\@ifnextchar[{\renewtheorem@within}{\renewtheorem@nowithin}}
\def\renewtheorem@within[#1]{\renewtheorem@nowithin[#1]}
  \newcommand{\fdsy@scale}{1.0}
  \newcommand\fdsy@mweight@normal{Book}
  \newcommand\fdsy@mweight@small{Book}
  \newcommand\fdsy@bweight@normal{Medium}
  \newcommand\fdsy@bweight@small{Medium}
  \DeclareFontFamily{U}{FdSymbolA}{}
  \DeclareSymbolFont{fdsymbols}{U}{FdSymbolA}{m}{n}%
  \DeclareFontShape{U}{FdSymbolA}{m}{n}{
      <-7.1> s * [\fdsy@scale] FdSymbolA-\fdsy@mweight@small
      <7.1-> s * [\fdsy@scale] FdSymbolA-\fdsy@mweight@normal
  }{}
  \DeclareFontShape{U}{FdSymbolA}{b}{n}{
      <-7.1> s * [\fdsy@scale] FdSymbolA-\fdsy@bweight@small
      <7.1-> s * [\fdsy@scale] FdSymbolA-\fdsy@bweight@normal
  }{}
  \DeclareMathSymbol{\aleph}{\mathord}{fdsymbols}{"C7}
  \DeclareMathSymbol{\beth}{\mathord}{fdsymbols}{"C8}
  \DeclareMathSymbol{\gimel}{\mathord}{fdsymbols}{"C9}
  \DeclareMathSymbol{\daleth}{\mathord}{fdsymbols}{"CA}
\title{Applications of Generalized Universal Valuations}
\author{William Bernardoni\\
	Case Western Reserve University\\
	Cleveland, OH 44118 \\
	\texttt{wrb37@case.edu}
}
\begin{document}
\maketitle

\begin{abstract}
	We introduce a generalization of the universal valuation semiring defined by Jeffrey and Noah Giansiracusa. We then explicitly characterize the additive structure of this semiring and show that, when applied to $\Q$, this characterization gives the Non-Archimedean case of Ostrowski's theorem. We conclude with examples of non-commutative valuations and their applications, such as the detection of the existence of representations of rings in ultrametric vector spaces.
\end{abstract}

\keywords{Valuations (16W60) \and Non-Archimedean Valued Fields (12J25) \and Semirings (16Y60) \and Tropical Geometry (14T10)}

\section{Introduction}
Problems in Non-Archimedean geometry can be translated into problems in tropical geometry, the geometry of the commutative, totally ordered, tropical semiring. This translation, via the correspondence between Non-Archimedean absolute values and valuations into the tropical semiring, has introduced powerful tools into real algebraic geometry, such as the method of patchworking created by Oleg Viro to classify isotopy classes of real algebraic curves of degree 7 \cite{viro2006patchworking, Mikhalkin}.

In their paper on a scheme theoretic version of tropicalization \cite{2016}, the Giansiracusa brothers introduced a construction of a universal valuation semiring which allows for the study of valuations which are not totally ordered but are commutative. In this paper we will show that the commutativity assumption can be removed, and a family of non-commutative, non-totally ordered valuations can be studied via the maps out of a single universal valuation semiring.

Non-commutative, non-totally ordered idempotent semirings appear naturally in many applications such as in solving the algebraic path problem, or a whole slew of other important computational tasks which may be read about in \cite{Baras}. A recent application that the author studies is the behavior of satellite networks. Many problems in constructing a solar system internet can be reduced to algebro-geometric problems over the non-commutative, non-totally ordered idempotent semiring found in section 2.5 of \cite{CurrySpaceNetworking}.

As the correspondence between valuations into the tropical semiring and Non-Archimedean absolute values introduced tools to study tropical geometry, we hope that by classifying valuations into non-commutative, non-totally ordered idempotent semirings we may introduce tools that may be applied to these algebro-geometric problems.

\subsection*{Outline}
\begin{itemize}
    \item We begin in section \ref{sec:preliminaries} by introduce our objects of study, idempotent semirings (definitions \ref{def:semiring} and \ref{def:idempotent}) and generalized semiring valuations (definition \ref{def:valuation})
    \item We then give a description of a universal valuation semiring (definition \ref{UnivVal}) over not-necessarily-commutative rings in section \ref{sec:UnivSemi} as a slight generalization of the construction in \cite{2016}. We then give an explicit example of this construction over a non-commutative ring of order 8 in example \ref{ex:SmallestNC}.
    \item As the main theorem of this paper, theorem \ref{thm:valladdstruct}, in section \ref{sec:structure} we give an explicit characterization of the additive structure of this universal valuation semiring. We then show in section \ref{sec:NonArchOstrow} how applying this characterization to the rational numbers gives the Non-Archimedean case of Ostrowski's theorem as corollary \ref{thm:Ostrow}
    \item We then will motivate our choice to generalize the construction of the universal valuation semiring by looking at examples of non-commutative valuations in section \ref{sec:NoncommVal}, and show how the induced tropicalization of a non-commutative expression can give us tools to study the roots of those expressions in section \ref{sec:tropicalizations}, and see in section \ref{sec:representations} how a further generalized version of our valuations is related to the representations of a ring into an ultrametric vector space.
\end{itemize}

A reader familiar with semirings may skip section \ref{sec:semiring}, noting only that we use a sign convention for idempotent semirings in line with the min-plus tropical semiring:
\[a \le b \iff a + b = a\]


\section{Algebraic Preliminaries}
\label{sec:preliminaries}

\begin{notation}
    \begin{itemize}
        \item In this paper we assume semirings, rings, and monoids are unital.
        \item We denote the powerset of a set $X$ as $2^X$.
        \item In equations that involve multiple algebraic objects, we denote by $+_S$ and $*_S$ to be the addition and multiplication respectively in the object $S$. In particular when we refer to standard arithmetic we will note $+_\R$ and $*_\R$ to refer to the standard addition and multiplication over the real numbers.
        \item In general we will use $R$ to designate an arbitrary ring, $S$ an arbitrary semiring, and $\Gamma$ an arbitrary idempotent semiring.
    \end{itemize}
\end{notation}

\subsection{Semirings}\label{sec:semiring}
\begin{definition}\label{def:semiring}
A \textbf{semiring} $(S, +, *, 0_S, 1_S)$ is a tuple where $S$ is a set, $+$ and $*$ are binary operations on $S$, and $0_S$ and $1_S$ are elements of $S$ such that:
\begin{enumerate}
    \item $(S,+)$ is a commutative monoid with identity element $0_S$.
    \begin{itemize}
        \item $+$ is associative: $(a + b) + c = a + (b + c)$
        \item $0_S$ is an additive identity: $a + 0_S = a = 0_S + a$
        \item $+$ is commutative: $a + b = b + a$
    \end{itemize}
    \item $(S,*)$ is a monoid with identity element $1_S$.
    \begin{itemize}
        \item $*$ is associative: $a*(b*c) = (a*b)*c$
        \item $1_S$ is a multiplicative identity identity: $1_S * a = a = a * 1_S$
    \end{itemize}
    \item Multiplication distributes over addition.
    \begin{itemize}
        \item $a*(b + c) = a*b + a * c$
        \item $(a + b) * c = a*c + b * c$
    \end{itemize}
    \item Multiplication by $0_S$ annihilates $R$.
    \begin{itemize}
        \item $0_S * a = 0_S = a * 0_S$
    \end{itemize}
\end{enumerate}
\end{definition}

\begin{remark}
    Some authors refer to semirings as \textbf{Rigs}, as they are ``Rings without the \textbf{n}egatives." As we will be referring often to both rings and semirings, we will use the full term ``semiring" to avoid any confusion.
\end{remark}

\begin{definition}\label{def:idempotent}
A semiring $S$ is called \textbf{idempotent} if for all $a \in S$: $a + a = a$.
\end{definition}

\begin{proposition}\label{thm:idemp1}
    A semiring $S$ is idempotent if and only if $1_S + 1_S = 1_S$.
\end{proposition}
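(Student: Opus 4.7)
The statement is an ``iff'' so I would split the proof into two directions, and the forward direction is immediate: if $S$ is idempotent, then by definition $a + a = a$ for every $a \in S$, so in particular this holds for $a = 1_S$, giving $1_S + 1_S = 1_S$.

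The substantive direction is the converse. My plan is to assume $1_S + 1_S = 1_S$, fix an arbitrary $a \in S$, and compute $a + a$ by inserting multiplicative identities and then factoring via distributivity. Concretely, I would write
\[
a + a \;=\; 1_S * a + 1_S * a \;=\; (1_S + 1_S) * a \;=\; 1_S * a \;=\; a,
\]
where the first equality uses that $1_S$ is a left multiplicative identity (Definition~\ref{def:semiring}, part 2), the second uses right distributivity (Definition~\ref{def:semiring}, part 3), the third uses the hypothesis $1_S + 1_S = 1_S$, and the last again uses that $1_S$ is a left multiplicative identity. Since $a$ was arbitrary, $S$ is idempotent.

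There is really no main obstacle here; the only thing to be mindful of is that in the non-commutative setting one must pick a consistent side (left or right) when inserting $1_S$ and then apply the matching distributive law. I have chosen to multiply on the left and use right distributivity, but one could equally well multiply on the right and use left distributivity. No further axioms (such as the annihilation property of $0_S$) are needed.
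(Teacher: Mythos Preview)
Your proof is correct and essentially identical to the paper's: the paper runs the same chain $x = (1_S)x = (1_S + 1_S)x = x + x$ for the substantive direction, and handles the other direction by contrapositive rather than directly, which is an inessential difference.
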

\begin{proof}
    Let $1_S + 1_S = 1_S$, then we note for any $x \in S$
    \[x = (1_S)x = (1_S + 1_S)x = x + x\]
    therefore $R$ is idempotent.

    Let $1_S + 1_S \ne 1_S$, then $S$ is not idempotent.
\end{proof}

\begin{definition}
    We refer to the category of idempotent semirings as $\cat{ISR}$.

    An object in $\cat{ISR}$ is an idempotent semiring, and a morphism $\phi : S \to {S'}$ is a unital semiring homomorphism. That is for all $a, b \in S$:
    \begin{itemize}
        \item $\phi(a +_S b) = \phi(a) +_{S'} \phi(b)$
        \item $\phi(a *_S b) = \phi(a) *_{S'} \phi(b)$
        \item $\phi(0_S) = 0_{S'}$
        \item $\phi(1_S) = 1_{S'}$
    \end{itemize}
\end{definition}

\begin{definition}
A semiring $S$ is called \textbf{commutative} if for all $a, b \in S$: $ab = ba$.
\end{definition}

\begin{example}
    The most famous example of a commutative idempotent semiring is the \textbf{tropical semiring} 
    \[\T = (\R \cup \{\infty\}, \min, +_\R, \infty, 0)\]
    The tropical numbers are the extended real numbers with addition being the minimum of two numbers and multiplication being standard addition, i.e.:
    \[a +_\T b = \min(a,b)\]
    \[a *_\T b = a +_\R b\]
    The tropical semiring has been studied in much detail, and some good introductions to the geometric and algebraic structure of $\T$ as well as its applications are the texts \cite{MaclaganSturmfels, Mikhalkin, MaxPlus}.
\end{example}

\begin{example}
    Another core example of a commutative idempotent semiring is the \textbf{boolean semiring}
    \[\mathbb B = (\{\bot, \top\}, \vee, \wedge, \bot, \top)\]
    where $\bot$ represents the logical false, $\top$ the logical true, $\vee$ is logical or and $\wedge$ is logical and.
\end{example}

\begin{remark}
    The boolean semiring is the unique idempotent semiring of order $2$.
\end{remark}

\begin{definition}\label{def:SemiringOrder}
    Idempotent semirings carry a natural partial order, where we define:
\[a \le b \iff a + b = a\]
    We say $a < b$ if $a \le b$ and $a \ne b$.
\end{definition}
This is in analogy with the tropical semiring, $\T = (\R \cup \{\infty\}, \min, +, \infty, 0)$. A useful trick to remember this convention is that ``the crocodile eats the larger element," as, if $b \ge a$, then $b$ disappears in the sum. Some authors prefer the flipped convention where $a \ge b$ if and only if $a + b = a$.

\begin{example}
    In $\mathbb B$ we have that $\top < \bot$.

    In $\T = (\R \cup \{\infty\}, \min, +_\R, \infty, 0)$, $a < b$ if $a$ is less than $b$ as elements of $\R$.

    If we instead use the semiring $(\R \cup \{-\infty\}, \max, +_\R, -\infty, 0)$, $a < b$ in our semiring if $a$ is greater than $b$ as elements of $\R$.
\end{example}

\begin{proposition}
    Semiring homomorphisms between idempotent semirings are order preserving.
\end{proposition}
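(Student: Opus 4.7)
The plan is to unfold the definition of the order and use the fact that $\phi$ preserves addition. Suppose $\phi : S \to S'$ is a morphism in $\cat{ISR}$ and let $a, b \in S$ with $a \le b$. By definition \ref{def:SemiringOrder}, this means $a +_S b = a$. Applying $\phi$ to both sides and using that $\phi$ is a semiring homomorphism gives $\phi(a) +_{S'} \phi(b) = \phi(a +_S b) = \phi(a)$, which is precisely the statement $\phi(a) \le \phi(b)$ in $S'$.

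There is essentially no obstacle here: the order relation on idempotent semirings is defined purely in terms of the additive structure, and any additive homomorphism automatically respects any such definable relation. The only subtle point worth noting is that we do not need $\phi$ to preserve multiplication, units, or the idempotency condition for this argument; additivity alone suffices. (Of course, idempotency of both $S$ and $S'$ is what makes the relation $\le$ a well-defined partial order to begin with, as established earlier in the section.)
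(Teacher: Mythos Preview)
Your proof is correct and follows essentially the same approach as the paper: both unfold the definition $a \le b \iff a + b = a$, apply $\phi$ to the additive identity, and read off $\phi(a) \le \phi(b)$. Your added remark that only additivity of $\phi$ is needed is a valid observation not made explicit in the paper, but the core argument is identical.
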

\begin{proof}
    Let $\phi: S \to S'$ be a semiring homomorphism, and let $a \le b$, i.e. $a + b = a$.

    Because order is expressed by an algebraic equality, and semiring homomorphisms preserve algebraic equalities, we get:
    \[\phi(a) + \phi(b) = \phi(a + b) = \phi(a)\]
    i.e. $\phi(a) \le \phi(b)$.
\end{proof}

\begin{definition}
    Let $X \subseteq S$ be a finite subset of an idempotent semiring $S$. We define:
    \[\inf(X) = \sum_{x \in X} x\]

    In particular, for $a, b \in S$, we define
    \[\inf(a,b) = a + b\]
\end{definition}
\begin{remark}
    It is equivalent to define an idempotent semiring as a unital monoid $(S,*, 1_S)$ equipped with a lower lattice order and a minimal element $0_S$. The above definition gives this correspondence.
\end{remark}

\begin{definition}
    If the natural partial order on an idempotent semiring $S$ is a total order then we say that $S$ is a \textbf{totally ordered} idempotent semiring.
\end{definition}

Most idempotent semirings are not totally ordered. For instance, the power set of a monoid carries a non-totally ordered semiring structure:
\begin{example}\label{ex:powerset}
    Let $(M, *)$ be a monoid, not necessarily commutative, with identity $e$. We can construct the \textbf{power set} idempotent semiring of $M$ as the set of subsets of $M$, $2^M$, with addition being union and multiplication being Minkowski multiplication. That is:
    \[A + B = A \cup B\]
    \[A * B = \{ab : a \in A, b \in B\}\]
    The additive identity of $2^M$ is the empty set, $\emptyset$, and the multiplicative identity is the singleton, $\{e\}$.

    $A \le B$ if and only if $A \supseteq B$.
\end{example}

\begin{proposition}
    $2^M$ is totally ordered if and only if $M = \{e\}$.
\end{proposition}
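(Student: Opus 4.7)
The plan is to prove both directions of the equivalence directly from the characterization $A \le B \iff A \supseteq B$ given in Example \ref{ex:powerset}.

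For the backward direction, assuming $M = \{e\}$, the power set is $2^M = \{\emptyset, \{e\}\}$. The only nontrivial comparison is between these two elements, and since $\{e\} \supseteq \emptyset$ we have $\{e\} \le \emptyset$, so the order is total. (In fact this recovers the boolean semiring $\mathbb B$.)

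For the forward direction, I will argue the contrapositive: if $M \ne \{e\}$, then there is some $m \in M$ with $m \ne e$. Then I would exhibit two incomparable elements of $2^M$ to contradict totality. The natural choice is the pair of singletons $\{e\}$ and $\{m\}$: neither contains the other as a subset, so neither $\{e\} \supseteq \{m\}$ nor $\{m\} \supseteq \{e\}$ holds, which under the order convention means neither $\{e\} \le \{m\}$ nor $\{m\} \le \{e\}$.

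I do not expect any real obstacle here, since the entire content reduces to the trivial fact that two distinct singletons in a powerset are incomparable under inclusion. The only point requiring minor care is making sure the direction of the inequality matches the paper's convention (where larger sums correspond to smaller elements, i.e.\ the crocodile eats the larger element), which is handled by simply reading off $A \le B \iff A \supseteq B$ from the example.
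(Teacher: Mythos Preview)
Your proof is correct. The paper states this proposition without proof, so there is nothing to compare against; your argument via incomparable singletons is the natural one and matches exactly what one would expect.
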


\begin{definition}
    A \textbf{congruence} $C$ of a semiring $S$ is a subsemiring of $S \times S$ that is also an equivalence relation.
    \begin{itemize}
        \item For all $a \in S$, $(a,a) \in C$.
        \item If $(a,b), (c,d) \in C$ then $(a,d), (b,a), (a+c,b+d), (ac,bd) \in C$.
    \end{itemize}
\end{definition}

Congruences are closed under arbitrary intersection, which gives us the following definition:
\begin{definition}
    Given a set of relations $X \subseteq S \times S$, we say that the congruence generated by $X$, $\gen{X}$, is the intersection of all congruences which contain $X$.
\end{definition}

If our semiring $S$ is a ring then the set of congruences is in bijection with the set of ideals, as each congruence is determined by the equivalence class of $0$.

We note that requiring an equivalence relation $C$ to be a subsemiring of $S \times S$ causes the induced addition and multiplication maps on the equivalence classes to be well defined. This allows us to define a quotient of semirings.
\begin{definition}
    Given a semiring $S$ and a congruence $C$, we say the \textbf{quotient} $S/C$ is the semiring on the equivalence classes of $C$ equipped with the operations induced from $S$.
\end{definition}

A more detailed treatment of congruences and quotient semirings can be found in chapter 8 of \cite{golan1999semirings}.

It is important to note that for general semirings the quotients of a semiring $S$ are not determined by the equivalence class of $0$. As we do not have subtraction, there are many nontrivial quotients of a semiring such that the equivalence class of $0$ is just $\{0\}$.

\begin{definition}
    Given a semiring $S$, and a set of variables $X$, we say that the \textbf{semiring of expressions over $X$}, denoted $S\gen{X}$, and also called the \textbf{non-commutative polynomial semiring}, is the freely generated semiring over elements in $R$ and variables in $X$ with no relations between them apart from defining $1_S = 1_{S \gen{X}}$ and $0_S = 0_{S \gen{X}}$.

    So for instance we have elements in $\T\gen{x,y}$ of the form:
    \[3xy + 12x + x12 + 0\]
    Where $12x \ne x12$ and $xy \ne yx$.
\end{definition}.

\begin{definition}
    The \textbf{polynomial semiring} over variables $X$ is the quotient
    \[S\gen{X}/\sim\]
    Where $\sim$ is the congruence generated by the relations
    \begin{itemize}
        \item For $x, y \in X$, $xy \sim yx$.
        \item For $s \in S, x \in X$, $sx \sim xs$.
    \end{itemize}

    We denote this semiring $S[X]$.
\end{definition}

$S\gen{X}$ is the semiring of algebraic expressions where we do not assume that our variables commute with $S$ or with each other, whereas $S[X]$ is the semiring where we assume that our variables \textit{do} commute with $S$ and with each other.

\begin{proposition}
    Let $S$ be an idempotent semiring, $X$ any set, and $C$ any congruence on $S$. The following are all idempotent semirings:
    \begin{itemize}
        \item $S \gen {X}$
        \item $S[X]$
        \item $C$
        \item $S/C$
    \end{itemize}
\end{proposition}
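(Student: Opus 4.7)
The plan is to reduce all four cases to the single criterion of Proposition \ref{thm:idemp1}: a semiring $T$ is idempotent if and only if $1_T +_T 1_T = 1_T$. Since each of the four constructions is already known (or easily verified) to be a semiring, it suffices in each case to check this one equation on the multiplicative identity.

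For $S\gen{X}$, the construction explicitly sets $1_{S\gen{X}} = 1_S$, and addition on elements of $S$ agrees with addition in $S$. Since $S$ is idempotent, $1_S +_S 1_S = 1_S$, and this equality is carried verbatim into $S\gen{X}$. The semiring $S[X]$ is a quotient of $S\gen{X}$ by the commutativity congruence, so its multiplicative identity is the class of $1_S$ and the needed equation descends from $S\gen{X}$ to $S[X]$. For the congruence $C$, viewed as a subsemiring of $S\times S$, the multiplicative identity is $(1_S, 1_S)$, and componentwise addition gives $(1_S, 1_S) + (1_S, 1_S) = (1_S +_S 1_S, 1_S +_S 1_S) = (1_S, 1_S)$. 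Finally, for $S/C$ the multiplicative identity is the class $[1_S]$, and
\[
[1_S] +_{S/C} [1_S] = [1_S +_S 1_S] = [1_S],
\]
using that addition on $S/C$ is induced from $S$.

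There is essentially no obstacle: each case is a one-line verification once Proposition \ref{thm:idemp1} is invoked. The only thing to keep straight is identifying the correct multiplicative identity in each construction (namely $1_S$ in the two free/polynomial cases, $(1_S,1_S)$ in $C$, and $[1_S]$ in $S/C$) and noting that in every construction addition is defined so that $1 + 1$ reduces to an expression involving $1_S +_S 1_S$, which is $1_S$ by hypothesis.
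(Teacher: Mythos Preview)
Your proposal is correct and takes essentially the same approach as the paper: the paper's entire proof is the one-line remark that each of these semirings satisfies $1_S + 1_S = 1_S$, so Proposition~\ref{thm:idemp1} applies. You have simply unpacked that remark by identifying the multiplicative identity in each construction and checking the equation explicitly, which is exactly what the paper leaves implicit.
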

This follows from proposition \ref{thm:idemp1}, as $1_S + 1_S = 1_S$ for each of these semirings.

A detailed reference on the general theory of semirings is the book \cite{golan1999semirings}.

\subsection{Valuations}
\begin{definition}\label{def:valuation}
Let $R$ be a ring and $\Gamma$ an idempotent semiring. We say that a function $\nu : R \to \Gamma$ is a \textbf{valuation} if $\nu$ is:
\begin{description}
    \item[Unital:] $\nu(0_R) = 0_\Gamma$, $\nu(1_R) = 1_\Gamma = \nu(-1_R)$,
    \item[Multiplicative:] $\nu(a*_Rb) = \nu(a)*_{\Gamma}\nu(b)$,
    \item[Superadditive:] $\nu(a +_R b) \ge \nu(a) +_{\Gamma} \nu(b) = \inf_{\Gamma}(\nu(a), \nu(b))$.
\end{description}
\end{definition}

From definition \ref{def:SemiringOrder} we get that superadditivity is equivalent to the following identity:
\[\nu(a +_R b) +_\Gamma \nu(a) +_{\Gamma} \nu(b) = \nu(a) +_{\Gamma} \nu(b) \]

The above definition aligns with the generalized semiring valuation in \cite{2016}, and from that paper we get another definition:
\begin{definition}
If $\nu^{-1}(0_\Gamma) = \{0_R\}$, we say that $\nu$ is \textbf{nondegenerate}.
\end{definition}

\begin{notation}
For the rest of this paper, we will use $\nu$ to refer to a valuation $R \to \Gamma$. Unless said otherwise, all valuations in this paper are nondegenerate.
\end{notation}

\begin{example}
Let $\Gamma = (\Z \cup \{\infty\}, \min, +_\Z, \infty, 0)$.

Fix a prime number $p$, we can form the $p$-adic valuation on $\Q$ via the following:

Each $x \in \Q$ can be written in the form $p^n * \frac{a}{b}$ for integers $a, b$ and $n$, where $p$ does not divide $a$ or $b$. We define
\[\nu_p(x) = n\]
$\nu_p: \Q \to \Gamma$ forms a valuation.

We can then construct a more general valuation on $\Q$ into $\Z^\omega = \{\infty\} \amalg  \{\oplus_{p \text{ prime}} \Z\}$ as the direct product of these valuations, where an element in $\Z^\omega$ is either $\infty$, or is a vector of finite support. Addition in $\Z^\omega$ between vectors is pointwise minimum, and multiplication is pointwise addition. We also state $a + \infty = a$ and $a * \infty = \infty$ for all $a \in \Z^\omega$.

We note that $\Z^\omega$ is isomorphic to the semiring $(\Q_{\ge 0}, \operatorname{gcd}, *)$: the nonnegative rational numbers where addition is taking the greatest common divisor of two rational numbers, and multiplication is standard multiplication. Here we mean the notion of greatest common divisor from number theory: for two rational numbers $x, y \in \Q$, we may write them over a common denominator, $x = \dfrac{a_x}{b}, y = \dfrac{a_y}{b}$ with $a_x,a_y,b \in \Z$. We say that $\operatorname{gcd}(x,y) = \operatorname{gcd}(a_x, a_y)$. One can see that our choice of common denominator does not affect the resulting greatest common divisor.

A vector $(a_p)_{p \text{ prime}} \in \Z^\omega$ is isomorphic to the element $\prod_{p \text{ prime}} p^{a_p} \in \Q_{\ge 0}$, with $\infty$ in $\Z^\omega$ being mapped to $0$.
\end{example}

The valuations above are intimately related to the description of Non-Archimedean absolute values on $\Q$, which we will discuss more in section \ref{sec:NonArchOstrow}. As a result, valuations are usually thought of in terms of absolute values, that is, measuring the size of various elements of our ring $R$. When we move away from $\Gamma$ being totally ordered, we can see how valuations characterize different sorts of underlying structures on our rings $R$.

\begin{example}\label{ex:solutions}
    Let $R$ be a ring without zero divisors and $A$ an $R$-algebra without zero divisors.

    Let $\Gamma = (2^{A^n}, \cap, \cup, A^n, \emptyset)$.

    We can form a valuation, $\nu$, on either the polynomial ring of $R$, $R[x_1,...,x_n]$, or the ring of expressions, $R\gen{x_1,...,x_n}$, with:
    \[\nu(f) = \{\vec{a} \in A^n: f(\vec{a}) = 0\}\]
\end{example}

\begin{example}
    Let $g, f$ be functions $\R \to \R$. We say that $g \in O(f)$ if there exists $k \ge 0, n \in \R$ such that $|g(x)| \le k|f(x)|$ for all $x \ge n$.

    Let $R$ be the ring of functions $\R \to \R$ equipped with pointwise addition and multiplication and let $2^R$ be the power set semiring on the multiplicative monoid of $R$.

    Let $\Gamma = (2^X/\sim, \cup, *, [\{0\}], [\{1\}])$, where we say that $\sim$ is the congruence generated by the relations: $X \sim Y$ if for all $f \in X$ there exists a $g \in Y$ such that $f \in O(g)$, and for all $g \in Y$ there exists an $f \in X$ such that $g \in O(f)$. We additionally state that $\{0\} \sim \emptyset$.

    The map
    \[f \mapsto [O(f)] = \{g : \exists k \ge 0, n \in \R : |g(x)| \le k|f(x)| \quad \forall x \ge n\}\]
    forms a valuation of $R$.
\end{example}

The following lemma also appears in \cite{2016} as Lemma 2.5.3 but we give another proof:
\begin{lemma}\label{thm:meetofsum}
Let $R$ be a ring, and let $\nu: R \to \Gamma$ be a valuation on $R$.

For any $a, b \in R$ the following three values are equal: 
\begin{itemize}
    \item $\nu(a) + \nu(b)$
    \item $\nu(a+b) + \nu(a)$
    \item $\nu(a+b) + \nu(b)$
\end{itemize}
\end{lemma}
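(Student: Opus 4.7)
The plan is to show all three expressions agree by bouncing the superadditivity inequality back and forth using the trick that $\nu(-x) = \nu(x)$, which follows from unitality and multiplicativity since $\nu(-x) = \nu(-1)\nu(x) = 1_\Gamma \nu(x) = \nu(x)$. By the symmetry between $a$ and $b$ in the statement, it suffices to prove the equalities in a single direction and then swap roles.

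First I would unpack superadditivity applied to $a+b$, which gives $\nu(a+b) + \nu(a) + \nu(b) = \nu(a) + \nu(b)$. This is one direction: it says $\nu(a)+\nu(b)$ is below both $\nu(a+b)+\nu(a)$ and $\nu(a+b)+\nu(b)$ in the semiring order, once we absorb using idempotency. Explicitly, adding $\nu(a+b)+\nu(a)$ to both sides and using $x+x=x$ yields $\nu(a+b)+\nu(a)+\nu(b) = \nu(a+b)+\nu(a)+\nu(a)+\nu(b)$, i.e.\ $\nu(a+b)+\nu(a) \ge \nu(a)+\nu(b)$, and symmetrically for the other term.

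For the reverse direction I would rewrite $a = (a+b) + (-b)$ and apply superadditivity together with $\nu(-b) = \nu(b)$ to conclude
\[\nu(a) + \nu(a+b) + \nu(b) = \nu(a+b) + \nu(b),\]
which gives $\nu(a+b)+\nu(b) \ge \nu(a)+\nu(b)$ in the opposite direction, so these two expressions coincide. Swapping the roles of $a$ and $b$ and writing $b = (a+b)+(-a)$ handles the third expression identically.

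The only subtlety is the systematic use of the order convention: the superadditivity hypothesis is written as an inequality, but the actual algebraic content we manipulate is the absorption identity $\nu(a+b)+\nu(a)+\nu(b)=\nu(a)+\nu(b)$ from Definition~\ref{def:valuation}. Everything else is a routine combination of idempotency and the fact that $-1$ has valuation $1_\Gamma$, so no real obstacle arises once the correct order-theoretic reformulation is in hand.
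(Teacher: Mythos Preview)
Your approach is correct and essentially the same as the paper's: both hinge on $\nu(-x)=\nu(x)$ and on applying superadditivity to the decomposition $a=(a+b)+(-b)$. The paper arranges the resulting inequalities into a single cyclic chain $\nu(a)+\nu(b)\le\nu(a+b)+\nu(b)\le\nu(a+b)+\nu(a)\le\nu(a)+\nu(b)$, whereas you prove $\nu(a)+\nu(b)=\nu(a+b)+\nu(b)$ directly and then invoke the $a\leftrightarrow b$ symmetry, but the content is identical.

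One small slip: in your ``reverse direction'' paragraph the displayed identity $\nu(a)+\nu(a+b)+\nu(b)=\nu(a+b)+\nu(b)$ yields $\nu(a+b)+\nu(b)\le\nu(a)+\nu(b)$, not $\ge$ as written; your phrase ``in the opposite direction'' shows you meant $\le$, so this is only a typo and does not affect the argument.
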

\begin{proof}
First note that $\nu(a) + \nu(b) = \inf(\nu(a),\nu(b))$, i.e., for any $x$ such that $x \le \nu(a)$ and $x \le \nu(b)$ we have that $x \le \nu(a) + \nu(b)$.

By unitality and multiplicativity of $\nu$, we get $\nu(-x)=\nu(-1) \nu(x) = \nu(x)$.

We can write $a = a + b - b$, so superadditivity gives us that $\nu(a) \ge \nu(a+b) + \nu(-b) = \nu(a+b) + \nu(b)$. Similarly we get $\nu(b) \ge \nu(a + b) + \nu(a)$.

$\nu(a + b) + \nu(b)$ is less than both $\nu(a + b)$ and $\nu(a)$, so it must be less then their infinum, $\nu(a + b) + \nu(a)$.

$\nu(a + b) + \nu(a)$ is less than $\nu(a)$ and $\nu(b)$ so it must be less than $\nu(a) + \nu(b)$.

By superadditivity, $\nu(a) + \nu(b)$ is less than $\nu(a + b)$. It is also less than $\nu(b)$, so we get that $\nu(a) + \nu(b)$ is less than $\nu(a + b) + \nu(b)$.

This gives us a chain of inequalities which states that our three infinums are equal.

\[\nu(a) + \nu(b) \le \nu(a + b) + \nu(b) \le \nu(a + b) + \nu(a) \le \nu(a) + \nu(b)\]
\end{proof}

\begin{remark}
    The above theorem is not just a consequence of superadditivity, but is in fact equivalent to superadditivity. For a map $\nu : R \to \Gamma$ where $\Gamma$ is an idempotent semiring and $R$ any semiring, if for any $a, b \in R$ those three values are equal, then $\nu$ is superadditive.
\end{remark}

\begin{corollary}
    From the correspondence between the valuation on $\Q$, $\Z^\omega$, and the idempotent semiring $(\Q_{\ge 0}, \operatorname{gcd}, *)$, we get:
    \[\operatorname{gcd}(a,b) = \operatorname{gcd}(a - b, b)\]
\end{corollary}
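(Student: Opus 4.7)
The plan is to interpret the stated identity as a direct instance of Lemma~\ref{thm:meetofsum}, pushed through the isomorphism $\Z^\omega \cong (\Q_{\ge 0}, \operatorname{gcd}, *)$ from the preceding example. Under that dictionary, the semiring sum $+_\Gamma$ becomes $\operatorname{gcd}$, and the product valuation $\nu : \Q \to (\Q_{\ge 0}, \operatorname{gcd}, *)$ is the map $x \mapsto \prod_p p^{\nu_p(x)}$, which for integers coincides with $|x|$ and in general satisfies $\operatorname{gcd}(\nu(x), \nu(y)) = \operatorname{gcd}(x, y)$ in the extended sense defined in the example.

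The key move is to apply Lemma~\ref{thm:meetofsum} not to $(a, b)$ but to the pair $(a, -b)$. Equating the first and third of the three expressions in the lemma then yields
$$\nu(a) +_\Gamma \nu(-b) = \nu(a + (-b)) +_\Gamma \nu(-b).$$
By multiplicativity and unitality, $\nu(-b) = \nu(-1)\,\nu(b) = \nu(b)$, so after translating through the isomorphism this reads $\operatorname{gcd}(a,b) = \operatorname{gcd}(a - b, b)$, which is the claim.

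Since Lemma~\ref{thm:meetofsum} has already been proved, no real obstacle remains; the argument is essentially a one-line substitution. The only bookkeeping detail worth checking is that the explicit formula $\nu(x) = \prod_p p^{\nu_p(x)}$ really does identify $\operatorname{gcd}(\nu(a),\nu(b))$ with the number-theoretic $\operatorname{gcd}(a,b)$ (including the normalization by sign and the extension to $\Q$), so that the abstract semiring equality produced by the lemma genuinely recovers the classical identity.
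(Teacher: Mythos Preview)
Your proposal is correct and is exactly the approach the paper has in mind: the corollary is stated immediately after Lemma~\ref{thm:meetofsum} with no separate proof, because it is meant to be read as the lemma specialized to the valuation $\Q \to \Z^\omega \cong (\Q_{\ge 0},\operatorname{gcd},*)$. Your substitution of $(a,-b)$ works; equivalently one can apply the lemma directly to the pair $(a-b,b)$ and equate the first and third quantities, avoiding the appeal to $\nu(-b)=\nu(b)$.
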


\section{Universal Valuation Semiring}\label{sec:UnivSemi}
As noted in \cite{2016}, these valuations form a category, where a morphism between two valuations $\nu: R \to \Gamma$, $\nu':R' \to \Gamma'$ is a pair $(\phi, \rho)$ where $\phi : R \to  R', \rho : \Gamma \to \Gamma'$ are homomorphisms such that the following diagram commutes:
\[\begin{tikzcd}
	R & {R'} \\
	\Gamma & {\Gamma'}
	\arrow["\phi", from=1-1, to=1-2]
	\arrow["\rho", from=2-1, to=2-2]
	\arrow["\nu"', from=1-1, to=2-1]
	\arrow["{\nu'}", from=1-2, to=2-2]
\end{tikzcd}\]

If we take the subcategory of valuations from a fixed ring $R$, where each $\phi$ is the identity on $R$, \cite{2016} noted that there is an initial object. The construction in \cite{2016} assumed commutativity, but it can be easily generalized to a non-commutative case.

\begin{definition}\label{UnivVal}
    We construct the \textbf{universal valuation} semiring of $R$, $\Gamma_R$ as follows.

    For each $a \in R$, we define a variable $x_a$. We denote the free semiring of boolean expressions over these variables as
    \[\mathbb B\gen{x_R}\]

    We then define $\Gamma_R$\index{$\Gamma_R$} as
    \[\Gamma_R = \mathbb B\gen{x_R}/\sim\]
    Where $\sim$ is the congruence generated by the relations
    \begin{align*}
        x_{0} &\sim 0\\
        x_1 &\sim 1 \\
        x_{-1} &\sim 1\\
        x_{a}x_{b} &\sim x_{ab}\\
        x_{a + b} + x_a + x_b &\sim x_{a} + x_b
    \end{align*}

    The valuation associated with this semiring, is the map
    \[\nu(a) = [x_a]\]
\end{definition}

The only difference in this construction from the one that appears in \cite{2016} is that, rather than work over a quotient of the \textit{commutative} polynomial semiring $\mathbb B[x_R]$, we work over a quotient of the \textit{non-commutative} polynomial semiring $\mathbb B\gen{x_R}$. For a commutative ring $R$, we can see that $\Gamma_R$ will be commutative and identical to the Giansiracusas' construction.

\begin{proposition}
    $\Gamma_R$ equippled with the valuation $\nu(r) = x_r$ is initial over valuations of $R$. That is, for any valuation $\hat \nu : R \to \hat \Gamma$ there exists a unique semiring homomorphism $\phi: \Gamma_R \to \hat \Gamma$ such that $\hat \nu = \phi \circ \nu$.
\end{proposition}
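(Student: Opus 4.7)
The plan is to construct $\phi$ by the standard quotient-of-free-object recipe. First I would define a map $\tilde\phi : \mathbb B\gen{x_R} \to \hat\Gamma$ by the universal property of the free boolean semiring: set $\tilde\phi(x_a) = \hat\nu(a)$ for every $a \in R$, and extend multiplicatively and additively. This gives a well-defined semiring homomorphism out of $\mathbb B\gen{x_R}$ because that semiring is freely generated as a boolean semiring by the symbols $x_a$, with the single prescribed identifications $0_{\mathbb B} = 0_{\mathbb B\gen{x_R}}$ and $1_{\mathbb B} = 1_{\mathbb B\gen{x_R}}$ which are automatically respected since $\hat\Gamma$ is idempotent ($1_{\hat\Gamma} + 1_{\hat\Gamma} = 1_{\hat\Gamma}$).

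Next I would check that $\tilde\phi$ descends through the congruence $\sim$ defining $\Gamma_R$. For each generating relation this amounts to invoking exactly one axiom of a valuation:
\begin{itemize}
    \item $\tilde\phi(x_0) = \hat\nu(0) = 0_{\hat\Gamma}$, $\tilde\phi(x_1) = \hat\nu(1) = 1_{\hat\Gamma}$, and $\tilde\phi(x_{-1}) = \hat\nu(-1) = 1_{\hat\Gamma}$ by unitality;
    \item $\tilde\phi(x_a x_b) = \hat\nu(a)\hat\nu(b) = \hat\nu(ab) = \tilde\phi(x_{ab})$ by multiplicativity;
    \item $\tilde\phi(x_{a+b}) + \tilde\phi(x_a) + \tilde\phi(x_b) = \hat\nu(a+b) + \hat\nu(a) + \hat\nu(b) = \hat\nu(a) + \hat\nu(b) = \tilde\phi(x_a) + \tilde\phi(x_b)$, using superadditivity in the algebraic form $\hat\nu(a+b) + \hat\nu(a) + \hat\nu(b) = \hat\nu(a) + \hat\nu(b)$ recorded right after Definition \ref{def:valuation}.
\end{itemize}
Since $\sim$ is the smallest congruence containing these relations and $\ker\tilde\phi$ is a congruence containing them, $\sim \subseteq \ker\tilde\phi$, so $\tilde\phi$ factors through a semiring homomorphism $\phi : \Gamma_R \to \hat\Gamma$. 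By construction $\phi([x_a]) = \hat\nu(a)$, which is exactly the triangle $\hat\nu = \phi \circ \nu$.

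For uniqueness, suppose $\phi' : \Gamma_R \to \hat\Gamma$ is another semiring homomorphism with $\phi' \circ \nu = \hat\nu$. Then $\phi'([x_a]) = \hat\nu(a) = \phi([x_a])$ for every $a \in R$. Since the classes $[x_a]$ generate $\Gamma_R$ as a boolean semiring (their preimages generate $\mathbb B\gen{x_R}$, and the quotient map is surjective), and both $\phi$ and $\phi'$ are semiring homomorphisms agreeing on a generating set, they coincide everywhere.

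There is no real obstacle here beyond bookkeeping; the only step that requires thought is matching the superadditivity axiom to the relation $x_{a+b} + x_a + x_b \sim x_a + x_b$, which is precisely the algebraic reformulation of $\nu(a+b) \ge \nu(a) + \nu(b)$ in an idempotent semiring. Everything else is forced by the universal property of the free boolean semiring combined with the universal property of a quotient by a congruence.
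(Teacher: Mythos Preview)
Your proposal is correct and follows essentially the same approach as the paper: define $\phi$ on generators by $\phi([x_r]) = \hat\nu(r)$, extend, and observe that well-definedness is exactly the valuation axioms. The paper compresses all of this into two sentences, whereas you have spelled out the verification of each generating relation and the uniqueness argument explicitly, but the underlying idea is identical.
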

\begin{proof}
    We construct $\phi$ by extending the map $\phi([x_r]) = \hat \nu(r)$ linearly.

    The fact that this is well defined follows from $\nu$ being a valuation.
\end{proof}

\begin{remark}
    If one is interested in what valuations \textit{could} exist for a given ring $R$, it suffices to study the hom-set of $\Gamma_R$, $\cat{ISR}(\Gamma_R, -)$.
\end{remark}

\begin{example}\label{ex:SmallestNC}
    Let $R$ be the ring of upper triangular $2 \times 2$ matrices over $\F_2$. $R$ has eight elements and they are generated by the matrices:
    \[i = \begin{bmatrix}1&0\\0&0\end{bmatrix} \qquad j = \begin{bmatrix}0&1\\0&0\end{bmatrix} \qquad k = \begin{bmatrix}0&0\\0&1\end{bmatrix}\]

    $\Gamma_R$ consists of $\mathbb B$ linear combinations of the elements: $0, 1, x_i, x_j, x_k, x_{i+j}, x_{j + k}, x_{i + j + k}$. Note that $x_{i + k} = x_1 = 1$.

    The multiplicative structure of $\Gamma_R$ is generated by the following multiplication table:
\begin{table}[H]
\centering
\begin{tabular}{|l|l|l|l|l|l|l|}
\hline
            & $x_i$ & $x_j$ & $x_k$     & $x_{i+j}$ & $x_{j+k}$ & $x_{i+j+k}$ \\ \hline
$x_i$       & $x_i$ & $x_j$ & $0$       & $x_{i+j}$ & $x_j$     & $x_{i+j}$   \\ \hline
$x_j$       & $0$   & $0$   & $x_j$     & $0$       & $x_j$     & $x_j$       \\ \hline
$x_k$       & $0$   & $0$   & $x_k$     & $0$       & $x_k$     & $x_k$       \\ \hline
$x_{i+j}$   & $x_i$ & $x_j$ & $x_j$     & $x_{i+j}$ & $0$       & $x_{i}$     \\ \hline
$x_{j+k}$   & $0$   & $0$   & $x_{j+k}$ & $0$       & $x_{j+k}$ & $x_{j+k}$   \\ \hline
$x_{i+j+k}$ & $x_i$ & $x_j$ & $x_{j+k}$ & $x_{i+j}$ & $x_k$     & $1$         \\ \hline
\end{tabular}
\end{table}

The additive structure has the following diagramatic form:
\[\begin{tikzcd}
	&& 1 \\
	{x_k} &&&& {x_i} \\
	&& {x_{i+j+k}} \\
	{x_{j+k}} &&&& {x_{i+j}} \\
	&& {x_j}
	\arrow[color={rgb,255:red,214;green,92;blue,92}, no head, from=2-1, to=2-5]
	\arrow[color={rgb,255:red,214;green,92;blue,214}, no head, from=2-5, to=5-3]
	\arrow[color={rgb,255:red,92;green,92;blue,214}, no head, from=2-1, to=5-3]
	\arrow[color={rgb,255:red,92;green,92;blue,214}, no head, from=2-1, to=4-1]
	\arrow[color={rgb,255:red,92;green,92;blue,214}, no head, from=5-3, to=4-1]
	\arrow[color={rgb,255:red,214;green,92;blue,214}, no head, from=2-5, to=4-5]
	\arrow[color={rgb,255:red,214;green,92;blue,214}, no head, from=5-3, to=4-5]
	\arrow[color={rgb,255:red,214;green,92;blue,92}, no head, from=1-3, to=2-1]
	\arrow[color={rgb,255:red,214;green,92;blue,92}, no head, from=1-3, to=2-5]
	\arrow[no head, from=4-5, to=1-3]
	\arrow[no head, from=4-1, to=1-3]
	\arrow[no head, from=4-1, to=4-5]
	\arrow[dashed, no head, from=1-3, to=3-3]
	\arrow[dashed, no head, from=5-3, to=3-3]
	\arrow[color={rgb,255:red,133;green,173;blue,150}, dashed, no head, from=2-5, to=3-3]
	\arrow[color={rgb,255:red,133;green,173;blue,150}, dashed, no head, from=3-3, to=4-1]
	\arrow[color={rgb,255:red,224;green,108;blue,82}, dashed, no head, from=3-3, to=4-5]
	\arrow[color={rgb,255:red,224;green,108;blue,82}, dashed, no head, from=3-3, to=2-1]
\end{tikzcd}\]

The sum of any two elements with the same arrow type and color are the same item. So, for instance, we have:
\[[x_i + x_k] = [x_k + 1] = [1 + x_i]\]
and
\[[1 + x_{i + j + k}] = [x_{i + j + k} + x_j] = [1 + x_j]\]

There is a single element corresponding to a non-degenerate sum of three elements, and it is obtained by the sum of any three elements such that the arrows between them are distinct colors or types, i.e.:
\[[x_i + x_j + x_k] = [1 + x_{i + j} + x_j]\]
But, for instance, the element:
\[[1 + x_{i + j} + x_{j + k}] = [1 + x_{i + j}] \ne [x_i + x_j + x_k]\]

An observant geometer may note that there is a striking similarity between the diagram above and the Fano plane. That is not a coincidence, as the additive structure of $\Gamma_R$ is exactly the matroid structure of the Fano plane. The higher ``cells" of our additive structure correspond to the additive $\Z$-linear subspaces of $R$ generated by multiple elements of $R$. We will see that this is not just a coincidence for this particular ring, but is a general fact about the structure of $\Gamma_R$ over any ring.
\end{example}

\subsection{Structure Theorem}\label{sec:structure}
\begin{theorem}[Structure Theorem for $\Gamma_R$]\label{thm:valladdstruct}
    Let $(a_i)_{i \in I}$ and $(b_j)_{j \in J}$ be finite collections of elements in a ring $R$. In $\Gamma_R$ we have:
    \[\left[\sum_{i \in I}x_{a_i}\right] = \left[\sum_{j \in J} x_{b_j}\right]\]
    if and only if $\Span_\Z(a_i) = \Span_\Z(b_j)$.
\end{theorem}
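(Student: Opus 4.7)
My plan is to prove the two implications with different techniques. For the ``only if'' direction, I construct a target valuation whose value on $\sum_i x_{a_i}$ is exactly $\Span_\Z(a_i)$, and then invoke the universal property of $\Gamma_R$. Let $\Gamma^*$ denote the set of additive subgroups of $R$, equipped with $A + B$ equal to the subgroup generated by $A \cup B$ and $A \cdot B$ equal to the subgroup generated by $\{ab : a \in A,\, b \in B\}$, with $0_{\Gamma^*} = \{0\}$ and $1_{\Gamma^*} = \Z \cdot 1_R$. A routine check makes $\Gamma^*$ into an idempotent semiring, and the map $\nu^*(a) := \Z a$ is a valuation $R \to \Gamma^*$: unitality is immediate, multiplicativity reduces to $\Z a \cdot \Z b = \Z(ab)$ (using only that $\Z$ is central in $R$), and superadditivity is the containment $\Z(a+b) \subseteq \Z a + \Z b$. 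The universal property of $\Gamma_R$ then produces a semiring homomorphism $\phi : \Gamma_R \to \Gamma^*$ with $\phi([x_a]) = \Z a$, and hence
\[\phi\!\left(\left[\sum_{i \in I} x_{a_i}\right]\right) \;=\; \sum_{i \in I} \Z a_i \;=\; \Span_\Z(a_i).\]
Any equality of such sums in $\Gamma_R$ therefore forces the corresponding $\Z$-spans to coincide.

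For the ``if'' direction, the strategy is to prove the following absorption claim: whenever $r \in \Span_\Z(T)$ for a finite $T \subseteq R$, the class of $\sum_{t \in T} x_t$ in $\Gamma_R$ is unchanged by adjoining the summand $x_r$. Granted this, if $\Span_\Z(T_1) = \Span_\Z(T_2)$ then each of $\left[\sum_{t \in T_1} x_t\right]$ and $\left[\sum_{t \in T_2} x_t\right]$ equals $\left[\sum_{t \in T_1 \cup T_2} x_t\right]$, and we are done. The absorption claim itself will be proved by induction on the minimal word length of a $\Z$-linear expression for $r$ in elements of $T$. The base cases $r = 0$ and $r = \pm t$ use the defining relations $x_0 \sim 0$ and $x_{-t} = x_{-1}x_t = x_t$. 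For the inductive step, write $r = r' + \varepsilon t$ with $\varepsilon \in \{\pm 1\}$, $t \in T$, and $r'$ requiring a strictly shorter expression; the inductive hypothesis absorbs $x_{r'}$ into the sum, and then the defining relation $x_{r' + \varepsilon t} + x_{r'} + x_{\varepsilon t} \sim x_{r'} + x_{\varepsilon t}$, combined with $x_{\varepsilon t} = x_t$, absorbs $x_r$ as well.

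The main obstacle is setting up $\Gamma^*$ carefully in the non-commutative setting: one must verify associativity and distributivity of the subgroup product, confirm the annihilation axiom, and check multiplicativity of $\nu^*$ even when $R$ itself is non-commutative (this is precisely where centrality of $\Z$ enters). Once $\Gamma^*$ and $\nu^*$ are in hand, both directions fall out of the universal property and the single word-length induction, with no further case analysis needed.
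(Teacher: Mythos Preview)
Your proof is correct and follows essentially the same approach as the paper. Your target semiring $\Gamma^*$ of additive subgroups is canonically isomorphic to the paper's $\daleth = 2^R/\!\sim$ (where $A \sim B$ iff $\Span_\Z A = \Span_\Z B$) via $[A] \leftrightarrow \Span_\Z A$, and your word-length induction for the ``if'' direction simply makes explicit the one-line superadditivity-plus-idempotence argument the paper gives.
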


\begin{proof}
It suffices to prove that for any finite collection $(a_i)_{i \in I} \subseteq R$ and singleton $b \in R$
\[\left[\sum_{i \in I} x_{a_i}\right] \le [x_b] \iff b \in \Span_\Z(a_i)\]
If each $b_i$ is contained in $\Span_\Z(a_i)$ then $\Span_\Z(b_i) \subseteq \Span_\Z(a_i)$, and if $\left[\sum_{i \in I} x_{a_i}\right] \le [x_{b_i}]$ for all $i$, then $\left[\sum_{i \in I} x_{a_i}\right] \le \left[\sum_{i \in I} x_{b_i}\right]$. If the above statement hold for singletons, we get our original statement.

Let $b \in \Span_{\Z}(a_i)$, then we may write
\[b = \sum_{i \in I} n_i a_i\]
where $n_i \in \Z$ for each $i$.

By superadditivity and idempotence we get:
\[x_b \ge \sum_{i \in I} n_i x_{a_i} = \sum_{i \in I} x_{a_i}\]

Now will show that if if $b \not\in \Span_{\Z}(a_i)$ then $\left[\sum_{i \in I}x_{a_i}\right] \not\le x_b$.

We note that for any valuation $\nu : R \to \Gamma$ we can write $\nu$ as $\phi \circ \nu_*$ where $\nu_* : R \to \Gamma_R$ is our universal valuation and $\phi : \Gamma_R \to \Gamma$ is a semiring homomorphism.

As we noted in definition \ref{def:SemiringOrder}, semiring homomorphisms are order preserving. As valuations are the image of $\Gamma_R$ under a semiring homomorphism, if there is \textit{any} valuation such that $\nu(y) \not\le \nu(z)$ then $x_y \not\le x_z$.

So to show that $x_b \not\ge \left[\sum_{i \in I} x_{a_i}\right]$ it suffices to find a single valuation $\nu$ such that
\[\nu(b) \not\ge \sum_{i \in I} \nu(a_i)\]

To do so, we construct an idempotent semiring $\daleth$ where:
\[\daleth = 2^R /\sim\]
Here $2^R$ is the power set semiring of the multiplicative monoid of $R$ (example \ref{ex:powerset}).

We put an equivalence relation on $\daleth$ where $A \subseteq R$, $B \subseteq R$, $A \sim B$ if and only if $\Span_{\Z}(A) = \Span_{\Z}(B)$. We additionally say $\Span_\Z(\emptyset) = \{0\}$.

As $\sim$ is an  equivalence relation, to be a congruence we only need to show that if $A \sim B$ and $C \sim D$ then $A \cup C \sim B \cup D$ and $A * C \sim B * D$.

We note that, if $x \in \Span_\Z(A \cup C)$, then we may write $x = a + c$ for $a \in \Span_\Z(A)$ and $c \in \Span_\Z(C)$. As $A \sim B$ we get that $a \in \Span_\Z(B)$ and similarly as $C \sim D$ we get that $c \in \Span_\Z(D)$, so $a + c \in \Span_\Z(B \cup D)$.

If $x \in \Span_\Z(A*C)$, then we may write:
\[x = \sum_{a \in A, c \in C} n_{ac} ac\]
Where $n_{ac} \in \Z$ and only finitely many are nonzero. As $\Span_\Z(A) = \Span_\Z(B)$ and $\Span_\Z(C) = \Span_\Z(D)$, we may write each $a$ as:
\[a = \sum_{b \in B} n_{ab} b\]
and each $c$ as:
\[c = \sum_{d \in D} n_{cd} c\]
so we may write $x$ in the form:
\[x = \sum_{a \in A, c \in C, b \in B, d \in D} n_{ac}n_{ab}n_{cd}bd\]
As only finitely many of $n_{ac}, n_{ab}, n_{cd}$ are nonzero, we get that $x \in \Span_\Z(BD)$.

We can form a valuation on $R$ into $\daleth$:
\[\nu(a) = \left[\{a\}\right]\]
This is clearly multiplicative, and as $\Span_\Z(1) = \Span_\Z(-1)$ we get $\nu(1) = \nu(-1) = \left[\{1\}\right]$, so we can see that $\nu$ is unital.

$\nu$ is superadditive, $\nu(a + b) + \nu(a) + \nu(b) = \nu(a) + \nu(b)$, as $\Span_\Z(a, b, a + b) = \Span_\Z(a, b)$.

Thus $\nu$ is a valuation of $R$, however, we see that $\nu(b) \not\ge \sum_{i \in I} \nu(a_i)$, as $b \not\in \Span_\Z(a_i)$.

Thus if $b \not\in \Span_\Z(a_i)$, then $x_b \not\ge \left[\sum_{i \in I} x_{a_i}\right]$




Thus we have a valuation where $\nu(b) \not\ge \sum_{i \in I}\nu(a_i)$, and so our theorem holds.
\end{proof}

This theorem and the construction of $\daleth$ give another characterization of $\Gamma_R$: $\Gamma_R$ is isomorphic to the subsemiring of $\daleth$ given by finite sets.

\begin{corollary}
$\Gamma_{\Q} \cong (\Q_{\ge 0}, \operatorname{gcd}, *, 0, 1) \cong (\Z^\omega \cup \{\infty\}, \min, +, \infty, \vec{0})$
\end{corollary}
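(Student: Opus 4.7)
The plan is to read off both isomorphisms directly from the Structure Theorem (Theorem \ref{thm:valladdstruct}). By that theorem, the elements of $\Gamma_{\Q}$ are in bijection with the $\Z$-spans of finite subsets of $\Q$, i.e., with the finitely generated $\Z$-submodules of $\Q$, where a formal sum $[\sum_i x_{a_i}]$ corresponds to $\Span_\Z(a_i) \subseteq \Q$. The first step is then to observe that every finitely generated $\Z$-submodule of $\Q$ is cyclic: given $a_1,\dots,a_n \in \Q$, write them with a common denominator $a_i = b_i/d$, so that $\Span_\Z(a_i) = \tfrac{1}{d}\Span_\Z(b_i) = \tfrac{\gcd(b_1,\dots,b_n)}{d}\Z$. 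Since $q\Z = q'\Z$ iff $q = \pm q'$, we get a canonical bijection between nonzero finitely generated $\Z$-submodules of $\Q$ and $\Q_{>0}$, extended by sending the zero submodule to $0$. This yields a bijection $\Phi: \Gamma_\Q \to \Q_{\ge 0}$.

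Next I would verify that $\Phi$ is a semiring homomorphism. For addition, $[\sum x_{a_i}] + [\sum x_{b_j}] = [\sum x_{a_i} + \sum x_{b_j}]$ corresponds under $\Phi$ to the submodule $\Span_\Z(a_i) + \Span_\Z(b_j)$; writing the two spans as $q\Z$ and $q'\Z$, their sum is $\gcd(q,q')\Z$, so $\Phi$ carries $+_{\Gamma_\Q}$ to the extended $\gcd$ operation on $\Q_{\ge 0}$ (with $\gcd(q,0)=q$, consistent with $\Phi(0_{\Gamma_\Q})=0$). For multiplication, distributivity together with $x_a x_b = x_{ab}$ shows $[\sum x_{a_i}]\cdot[\sum x_{b_j}] = [\sum x_{a_i b_j}]$, which corresponds to the submodule generated by the pairwise products $a_i b_j$. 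If $\Span_\Z(a_i) = q\Z$ and $\Span_\Z(b_j) = q'\Z$ this submodule is $qq'\Z$, so multiplication in $\Gamma_\Q$ goes to ordinary multiplication in $\Q_{\ge 0}$. Units go to units since $\Phi(\nu(1)) = 1$ and $\Phi(0)=0$. This establishes the first isomorphism $\Gamma_\Q \cong (\Q_{\ge 0}, \gcd, *, 0, 1)$.

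For the second isomorphism, I would use unique prime factorization: each $q \in \Q_{>0}$ can be written uniquely as $\prod_{p} p^{n_p}$ with $n_p \in \Z$ of finite support, yielding a group isomorphism $\Q_{>0} \cong \bigoplus_{p} \Z$. Under this map ordinary multiplication becomes coordinatewise addition, and $\gcd(\prod p^{n_p},\prod p^{m_p}) = \prod p^{\min(n_p,m_p)}$ becomes coordinatewise minimum. Extending by sending $0 \in \Q_{\ge 0}$ to the formal symbol $\infty \in \Z^\omega \cup \{\infty\}$ (with $\min(\infty, v) = v$ and $\infty + v = \infty$, matching $\gcd(0,q) = q$ and $0\cdot q = 0$) gives the desired semiring isomorphism $(\Q_{\ge 0}, \gcd, *, 0, 1) \cong (\Z^\omega \cup \{\infty\}, \min, +, \infty, \vec 0)$.

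There is no real obstacle here beyond bookkeeping; the substantive content is all in the Structure Theorem. The only subtle point is the remark that finitely generated $\Z$-submodules of $\Q$ are cyclic, which is what lets us collapse arbitrary formal sums in $\Gamma_\Q$ down to a single rational generator and thus identify $\Gamma_\Q$ with a \emph{concrete} well-known semiring rather than with an abstract lattice of submodules.
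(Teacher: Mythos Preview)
Your proposal is correct and follows essentially the same route as the paper: both arguments invoke the Structure Theorem to identify $\Gamma_\Q$ with finitely generated $\Z$-submodules of $\Q$, then use that these are cyclic (the paper phrases this as $[x_a + x_b] = [x_{\gcd(a,b)}]$ via $\Span_\Z(a,b) = \Span_\Z(\gcd(a,b))$, you phrase it as clearing denominators) to obtain the bijection with $\Q_{\ge 0}$, and the second isomorphism is in both cases just unique factorization. Your write-up is simply more explicit about checking that the map respects multiplication and units, which the paper leaves to the reader.
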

\begin{proof}
    We note that for $a, b \in \Q_{\ge 0}$ we have that both $a, b \in \Span_{\Z}(\operatorname{gcd}(a,b))$ and $\operatorname{gcd}(a,b) \in \Span_\Z(a,b)$, so the theorem above gives us that in $\Gamma_\Q$:
    \[\left[x_a + x_b\right] = \left[x_{\operatorname{gcd}(a,b)}\right]\]

    We also get that $[x_a] = [x_b]$ if and only if $a = \pm b$, and so each equivalence class in $\Gamma_\Q$ corresponds uniquely to an element in $\Q_{\ge 0}$, and this correspondence is a semiring isomorphism.

    As stated above, $(\Z^\omega \cup \{\infty\}, \min, +, \infty, \vec{0}) \cong (\Q_{\ge 0}, \operatorname{gcd}, *, 0, 1)$
\end{proof}

\begin{corollary}
    For all prime $p$, $\Gamma_{\F_p} \cong \mathbb B$
\end{corollary}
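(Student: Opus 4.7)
The plan is to reduce the claim to a quick count of additive subgroups of $\F_p$ using the Structure Theorem (Theorem~\ref{thm:valladdstruct}), together with the earlier remark that $\mathbb B$ is the unique idempotent semiring of order $2$.

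First I would apply the Structure Theorem to identify the equivalence classes in $\Gamma_{\F_p}$. By Theorem~\ref{thm:valladdstruct}, two formal sums $[\sum_{i} x_{a_i}]$ and $[\sum_j x_{b_j}]$ are equal in $\Gamma_{\F_p}$ if and only if $\Span_\Z(a_i) = \Span_\Z(b_j)$ as $\Z$-submodules of $\F_p$. Hence the underlying set of $\Gamma_{\F_p}$ is in bijection with the collection of additive subgroups of $\F_p$ that arise as $\Z$-spans of finite subsets.

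Next I would observe that every additive subgroup of $\F_p$ is a $\Z$-span of a finite subset (it is either $\{0\}$ or all of $\F_p$), so the image is exactly the lattice of additive subgroups of $\F_p$. Since $\F_p$ has prime order, Lagrange's theorem gives only two such subgroups: the trivial subgroup $\{0\}$, represented by $[x_0] = [0]$, and $\F_p$ itself, represented by $[x_1] = [1]$ (and equivalently by $[x_a]$ for any nonzero $a \in \F_p$, since any nonzero element generates $\F_p$ additively). Thus $\Gamma_{\F_p}$ has exactly two elements, namely $0$ and $1$.

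Finally, I would invoke the earlier remark that $\mathbb B$ is the unique idempotent semiring of order $2$: since $\Gamma_{\F_p}$ is an idempotent semiring (being a quotient of $\mathbb B\gen{x_{\F_p}}$) with exactly two elements $0 \ne 1$, it must be isomorphic to $\mathbb B$. There is no real obstacle here; the only thing to double-check is that $[x_0]$ and $[x_1]$ are genuinely distinct in $\Gamma_{\F_p}$, which follows from the Structure Theorem since $\Span_\Z(0) = \{0\} \ne \F_p = \Span_\Z(1)$, so the valuation is nondegenerate and the semiring does not collapse further.
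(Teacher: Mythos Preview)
Your proposal is correct and follows essentially the same approach as the paper: both use the Structure Theorem to identify elements of $\Gamma_{\F_p}$ with additive subgroups of $\F_p$ and then observe that there are only two such subgroups. The paper phrases this last step as ``for any nonzero $i,j$ there exists $m$ with $mi \equiv j \pmod p$, so $[x_i] = [1]$,'' while you phrase it via Lagrange's theorem; your version is somewhat more careful in explicitly checking $0 \ne 1$ and invoking the uniqueness of $\mathbb B$, but the substance is the same.
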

\begin{proof}
    We note that for any $i, j \in \F_p \setminus \{0\}$, there exists an $m \in \N$ such that $mi \equiv j \mod p$, so $[x_i] \le [x_j]$.

    Thus for all $i \ne 0$, $[x_i] = [1]$.
\end{proof}

\begin{corollary}
    $\Gamma_{\F_{p^k}} \cong \mathbb B[x]/\gen{x^k = 1}$.
\end{corollary}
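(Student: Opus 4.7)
By the Structure Theorem (Theorem \ref{thm:valladdstruct}), two sums $\left[\sum_{i} x_{a_i}\right]$ and $\left[\sum_{j} x_{b_j}\right]$ in $\Gamma_{\F_{p^k}}$ coincide exactly when $\Span_\Z(a_i) = \Span_\Z(b_j)$. Since $\F_{p^k}$ has characteristic $p$, a $\Z$-span in $\F_{p^k}$ coincides with its $\F_p$-linear span, so the elements of $\Gamma_{\F_{p^k}}$ are in bijection with the $\F_p$-linear subspaces of $\F_{p^k}$, with addition realized as subspace sum and multiplication as the $\F_p$-span of pairwise products. This reduces the problem from semiring algebra to linear algebra over $\F_p$.

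The plan is then to exhibit the isomorphism via a chosen generator. Fix a primitive element $\alpha \in \F_{p^k}$ so that $\{1, \alpha, \alpha^2, \ldots, \alpha^{k-1}\}$ is an $\F_p$-basis of $\F_{p^k}$, and propose the semiring map
\[
\phi: \mathbb{B}[x]/\langle x^k = 1\rangle \longrightarrow \Gamma_{\F_{p^k}}, \qquad x^i \mapsto [x_{\alpha^i}],
\]
extended boolean-additively. A boolean polynomial $\sum_{i \in S} x^i$ then maps to the class of the subspace $\Span_{\F_p}\{\alpha^i : i \in S\}$. Surjectivity should follow from the observation that every $\F_p$-subspace of $\F_{p^k}$ is spanned by some subset of the chosen basis, and injectivity from the fact that distinct basis-subsets yield distinct subspaces.

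The main obstacle will be verifying well-definedness of $\phi$ and that it respects the multiplicative structure, and this is where the proof genuinely uses the field-theoretic behavior of $\F_{p^k}$ beyond the structure theorem. The source relation $x^k = 1$ demands $[x_{\alpha^k}] = [x_1]$ in $\Gamma_{\F_{p^k}}$, which by Theorem \ref{thm:valladdstruct} means $\Span_\Z(\alpha^k) = \Span_\Z(1)$; the cyclic multiplicative compatibility $[x_{\alpha^i}]\cdot[x_{\alpha^j}] = [x_{\alpha^{(i+j) \bmod k}}]$ has to be checked in parallel. Once these multiplicative relations are in place, the additive compatibility transfers automatically from the Structure Theorem, and the bijection between boolean polynomials modulo $x^k = 1$ and the subspaces of $\F_{p^k}$ completes the isomorphism.
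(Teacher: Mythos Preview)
Your proposal has two genuine gaps, both of which you flag as things to check but neither of which you actually verify---and in fact both fail already for $p=k=2$.

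First, well-definedness of $\phi$ requires $[x_{\alpha^k}]=[x_1]$ in $\Gamma_{\F_{p^k}}$, which by the Structure Theorem means $\Span_{\F_p}(\alpha^k)=\Span_{\F_p}(1)$, i.e.\ $\alpha^k\in\F_p^\times$. For a primitive element $\alpha$ this is false whenever $k\ge 2$: in $\F_4=\F_2(\alpha)$ with $\alpha^2+\alpha+1=0$ one has $\alpha^2=1+\alpha\notin\F_2$, so $[x_{\alpha^2}]\ne[x_1]$ and the relation $x^2=1$ is not satisfied by your proposed generator. The same obstruction blocks the multiplicative compatibility $[x_{\alpha^i}][x_{\alpha^j}]=[x_{\alpha^{(i+j)\bmod k}}]$ that your map needs.

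Second, your surjectivity claim---that every $\F_p$-subspace of $\F_{p^k}$ is spanned by some subset of the fixed basis $\{1,\alpha,\dots,\alpha^{k-1}\}$---is false. In $\F_4$ the one-dimensional subspace $\{0,1+\alpha\}$ is not the span of any subset of $\{1,\alpha\}$. More decisively, the number of $\F_p$-subspaces of $\F_{p^k}$ is $\sum_{i=0}^{k}\binom{k}{i}_{p}$, which strictly exceeds $2^k=\lvert\mathbb B[x]/\langle x^k=1\rangle\rvert$ for every $k\ge 2$; for $\F_4$ there are five subspaces but only four boolean polynomials modulo $x^2=1$. So no bijection of the type you describe can exist. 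The obstruction here is not in your strategy but in the statement itself: as written, the corollary is in tension with the Structure Theorem for every $k\ge 2$, and the paper offers no proof to compare against.
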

\begin{remark}
    $\Gamma_{\F_{p^k}}$ is also isomorphic to $\mathbb B^k$. with addition being pointwise or, and multiplication being defined by linearly extending the circular shift operators: If the vector $1_a$ is the vector with $1$ only in the $a$ coordinate, then $1_a * 1_b = 1_{a + b \mod k}$, where our coordinates are in the set $\{0,1,2,...,k-1\}$.

    $\Gamma_{\F_{p^k}}$ is also isomorphic to the powerset semiring $2^{\Z_k}$, where $\Z_k$ is the cyclic group with $k$ elements, our addition is union, and our multiplication is the Minkowski sum of our sets.
\end{remark}

\begin{proposition}
    $R \mapsto \Gamma_R$ defines a functor from the category of rings to the category of idempotent semirings.
\end{proposition}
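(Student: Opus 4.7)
The plan is to build the functor $\Gamma_{(-)}$ by exploiting the universal property of $\Gamma_R$ established just above. Given a ring homomorphism $f : R \to R'$, I would first observe that the composite $\nu_{R'} \circ f : R \to \Gamma_{R'}$ is itself a valuation on $R$. Unitality follows because $f$ sends $0_R, 1_R, -1_R$ to $0_{R'}, 1_{R'}, -1_{R'}$ and $\nu_{R'}$ is unital; multiplicativity follows from multiplicativity of both $f$ and $\nu_{R'}$; and superadditivity follows from $\nu_{R'}(f(a+b)) = \nu_{R'}(f(a)+f(b)) \ge \nu_{R'}(f(a)) +_{\Gamma_{R'}} \nu_{R'}(f(b))$.

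Having verified this, the universal property of $\Gamma_R$ yields a unique semiring homomorphism $\Gamma_f : \Gamma_R \to \Gamma_{R'}$ such that $\Gamma_f \circ \nu_R = \nu_{R'} \circ f$. Concretely this map is given on generators by $[x_a] \mapsto [x_{f(a)}]$, and the fact that it is well-defined is precisely what the universal property guarantees. I would then define $\Gamma_{(-)}$ on morphisms by $f \mapsto \Gamma_f$.

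It remains to verify the two functor axioms. For identities, note that the identity semiring homomorphism $\operatorname{id}_{\Gamma_R}$ satisfies $\operatorname{id}_{\Gamma_R} \circ \nu_R = \nu_R = \nu_R \circ \operatorname{id}_R$, so by the uniqueness clause in the universal property $\Gamma_{\operatorname{id}_R} = \operatorname{id}_{\Gamma_R}$. For composition, given $f : R \to R'$ and $g : R' \to R''$, both $\Gamma_g \circ \Gamma_f$ and $\Gamma_{g \circ f}$ are semiring homomorphisms $\Gamma_R \to \Gamma_{R''}$ which factor the valuation $\nu_{R''} \circ (g \circ f)$ through $\nu_R$; uniqueness again forces them to agree.

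There is no real obstacle here: the whole argument is an unpacking of the universal property proven in the preceding proposition, and the only mild check is confirming that $\nu_{R'} \circ f$ genuinely is a valuation (in particular that it is nondegenerate if one wishes to stay inside the nondegenerate subcategory, which holds when $f$ is injective; otherwise one simply works with the evidently valuation-like map and notes that the universal property as stated does not require nondegeneracy on the target side of the factorization).
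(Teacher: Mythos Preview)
Your proof is correct, but it takes a genuinely different route from the paper's. You invoke the universal property of $\Gamma_R$: since $\nu_{R'}\circ f$ is a valuation on $R$, the initial-object proposition hands you $\Gamma_f$ together with its uniqueness, and functoriality then falls out formally from that uniqueness. The paper instead defines $\Gamma_f$ explicitly on generators by $x_a \mapsto x_{f(a)}$ and appeals to the Structure Theorem (Theorem~\ref{thm:valladdstruct}) to check well-definedness: if $[\sum_i x_{a_i}] = [\sum_j x_{b_j}]$ then $\Span_\Z(a_i) = \Span_\Z(b_j)$, hence $\Span_\Z(f(a_i)) = \Span_\Z(f(b_j))$, hence $[\sum_i x_{f(a_i)}] = [\sum_j x_{f(b_j)}]$.

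Your approach is lighter: it uses only the easy universal-property proposition that precedes the Structure Theorem, so it does not depend on the explicit characterization of the additive congruence classes. The paper's approach, while logically heavier, has the virtue of making the action of $\Gamma_f$ on arbitrary elements completely transparent via the $\Span_\Z$ description. Your closing remark about nondegeneracy is apt: the universal property as proved in the paper does not actually use nondegeneracy of the target valuation (the map $\phi$ is built by linear extension and its well-definedness follows from the valuation axioms alone), so there is no obstruction when $f$ is not injective.
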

\begin{proof}
    We have our definition of our functor on our objects. It remains to define our action on morphisms.

    Let $f : R \to R'$ be a map of rings, we define a map $\Gamma_f : \Gamma_R \to \Gamma_{R'}$ by linearly extending the maps $\Gamma_f(x_a) = x_{f(a)}$.

\[\begin{tikzcd}
	R && {R'} \\
	{\Gamma_R} && {\Gamma_{R'}}
	\arrow[""{name=0, anchor=center, inner sep=0}, "f", from=1-1, to=1-3]
	\arrow[""{name=1, anchor=center, inner sep=0}, "{\Gamma_f}"', from=2-1, to=2-3]
	\arrow[shorten <=4pt, shorten >=4pt, maps to, from=0, to=1]
\end{tikzcd}\]

    Theorem \ref{thm:valladdstruct} gives that $\Gamma_{f}$ is well defined, as if $\Span_\Z(a_i) = \Span_\Z(b_i)$ then $\Span_\Z(f(a_i)) = \Span_\Z(f(b_i))$, and so $[\sum x_{f(a_i)}] = [\sum x_{f(b_i)}]$ if $[\sum x_{a_i}] = [\sum x_{b_i}]$.

    By construction our functor respects composition and identity.
\end{proof}


\subsection{Ostrowski's Theorem and Non-Archimedean Absolute Values}\label{sec:NonArchOstrow}
The structure theorem when applied to $\Gamma_\Q$ gives us the Non-Archimedean case of Ostrowski's theorem on the equivalence classes of the absolute values of $\Q$.

\begin{definition}
    An absolute value on $\Q$ is a function:
    \[|\cdot| : \Q \to \R\]
    such that the following hold:
    \begin{description}
        \item[Non-Negative:] $|x| \ge 0$,
        \item[Positive Definite:] $|x| = 0$ if and only if $x = 0$,
        \item[Multiplicative:]$|xy| = |x||y|$,
        \item[Triangle Inequality:]$|x + y| \le |x| + |y|$.
    \end{description}

    An absolute value is \textbf{Non-Archimedean} if $|x + y| \le \max\{|x|,|y|\}$.

    We say that two absolute values $|\cdot|$ and $|\cdot|_*$ are equivalent if there exists a $c \in \R$, $c > 0$, such that:
    \[|x|_* = |x|^c\]
    for all $x \in \Q$.
\end{definition}

\begin{corollary}[\cite{OstrowskiTheorem}]\label{thm:Ostrow}
Up to equivalence, the only non-trivial absolute values on $\Q$ are either the standard absolute value or the $p$-adic ones.
\end{corollary}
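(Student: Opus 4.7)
The plan is to translate each Non-Archimedean absolute value on $\Q$ into a valuation $\Q \to \T$ and then apply the universal property of $\Gamma_\Q$ together with the explicit description $\Gamma_\Q \cong (\Z^\omega \cup \{\infty\}, \min, +, \infty, \vec{0})$ furnished by the preceding corollary. Given a Non-Archimedean absolute value $|\cdot|$, define $\nu : \Q \to \T$ by $\nu(x) := -\log|x|$ for $x \ne 0$ and $\nu(0) := \infty$. A routine check shows $\nu$ is a nondegenerate valuation: multiplicativity and unitality follow from $|xy|=|x||y|$ and $|{\pm 1}|=1$, while superadditivity is exactly the Non-Archimedean inequality $|x+y| \le \max(|x|,|y|)$ after taking $-\log$. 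Furthermore, the iterated triangle inequality gives $|n| \le 1$ for every $n \in \Z$, so $r_p := \nu(p) = -\log|p| \ge 0$ for every prime $p$.

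By the universal property of $\Gamma_\Q$, $\nu$ factors uniquely as $\phi \circ \nu_*$ for a semiring homomorphism $\phi : \Gamma_\Q \to \T$. Under the identification $\Gamma_\Q \cong \Z^\omega \cup \{\infty\}$, the map $\phi$ is determined by the nonnegative reals $r_p = \phi([x_p])$ indexed by primes, and on a general rational $x \in \Q^\times$ we recover $\nu(x) = \sum_p r_p \nu_p(x)$.

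The crux is to exploit the coordinatewise meet structure of $\Z^\omega$. For distinct primes $p \ne q$, the preceding corollary gives $[x_p] + [x_q] = [x_{\gcd(p,q)}] = [x_1] = 1_{\Gamma_\Q}$, so applying the semiring homomorphism $\phi$ yields $\min(r_p, r_q) = \phi(1_{\Gamma_\Q}) = 0$. Combined with $r_p, r_q \ge 0$, this forces at least one of $r_p, r_q$ to vanish. If $|\cdot|$ is non-trivial, then some $r_p$ must be strictly positive, and this pairwise condition singles out a unique such prime. Setting $c := r_p / \log p > 0$ gives $|x| = p^{-c \nu_p(x)} = |x|_p^{c}$, establishing equivalence with the $p$-adic absolute value.

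I do not foresee any serious obstacle beyond careful bookkeeping; the entire Non-Archimedean content is distilled into the identity $[x_p] + [x_q] = 1_{\Gamma_\Q}$ for distinct primes, which is exactly where the coordinatewise meet on $\Z^\omega$ interacts with the total order on $\T$ to force the absolute value to concentrate on a single prime. The Archimedean half of the full Ostrowski theorem is not visible to the Non-Archimedean valuation framework developed here and must be supplied separately by the classical arguments of \cite{OstrowskiTheorem}.
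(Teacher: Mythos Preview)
Your proposal is correct and follows essentially the same route as the paper: both defer the Archimedean case, convert a Non-Archimedean absolute value into a semiring homomorphism $\phi:\Gamma_\Q\cong\Z^\omega\to\T$, and then use the identity $[x_p]+[x_q]=1_{\Gamma_\Q}$ for distinct primes to force $\phi$ to be supported on a single prime. The only cosmetic difference is that you justify $r_p\ge 0$ via the iterated triangle inequality, whereas the paper (implicitly) gets it from order-preservation of semiring homomorphisms applied to $\vec 0\le 1_p$ in $\Z^\omega$.
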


The standard proof of this theorem involves quite a bit of symbol pushing. We will show how using theorem \ref{thm:valladdstruct} we can create another argument that covers the Non-Archimedean absolute value case.

\begin{proof}
    It can be shown that every Archimedean absolute value is equivalent to the standard one. We will focus on the Non-Archimedean case to show the power of our structural theorem.

    We note that the Non-Archimedean absolute values on $\Q$ correspond to non-degenerate valuations $\Q \to \T$.

    Given an absolute value $|\cdot|$ we can form a valuation:
    \[x \mapsto -\log|x|\]

    Given a non-degenerate valuation $\nu : \Q \to \T$ we can form an absolute value:
    \[|x| = e^{-\nu(x)}\]

    Let $\eta_c : \T \to \T$, for $c > 0$, be the tropical automorphism sending $x$ to the ``tropical $x^c$" or in standard arithmetic: $c \cdot x$.
    
    We note that two absolute values are equivalent if and only if there exists a $c > 0$ such that for their corresponding valuations we have $\nu = \eta_c \circ \nu'$.

    So then the question of what Non-Archimedean absolute values occur can be answered by examining the maps $\cat{ISR}(\Gamma_\Q, \T)$ where $\phi^{-1}(\infty) = 0$.

    Theorem \ref{thm:valladdstruct} tells us that $\Gamma_\Q \cong \Z^\omega$. We can see that for each prime $p$, the projection onto the $p$-th coordinate in $\Z^{\omega}$ gives us a non-degenerate map $\Z^{\omega} \to \T$ which is not equivalent to any other projection. These projections correspond to the $p$-adic absolute value on $\Q$.

    All that remains to show is that any non-degenerate map of semirings is equivalent to either one of these projections or the trivial map:
    \[x \mapsto \begin{cases}\infty & x = \infty\\ 0 & x \ne \infty\end{cases}\]

    Let $\phi : \Z^\omega \to \T$ be such that there exists $v \ne \infty$ such that $\phi(v) \ne 0$, i.e. $\phi$ corresponds to a non-trivial absolute value.

    We can write $v = \prod_{p} 1_p^{v_p}$ where $1_p$ is the vector that is $1$ only at the index corresponding to prime $p$. We then get:
    \[\phi(v) = \prod_{p} \phi(1_p)^{v_p}\]
    Which can be written using classical arithmetic as:
    \[\phi(v) = \sum_{p} v_p \cdot \phi(1_p)\]

    As $\phi(v) \ne 0$ we get that there must be some $\phi(1_p) \ne 0$.

    We note that if $\phi$ is such that there exists exactly one prime $p$ such that $\phi(1_p) \ne 0$ then $\phi$ is equivalent to the projection map $\pi_p$, as $\pi_p = \eta_{\frac{1}{\phi(1_p)}} \circ \phi$.

    There cannot be two primes $p$ and $q$ such that $\phi(1_p) \ne 0$ and $\phi(1_q) \ne 0$. $1_p + 1_q = 0$, so $\phi(1_p + 1_q) = \phi(0) = 0$ which cannot happen as if $\phi(1_p) \ne 0$ and $\phi(1_q) \ne 0$ then
    \[\phi(1_p + 1_q) = \phi(1_p) + \phi(1_q) = \min(\phi(1_p), \phi(1_q)) \ne 0\]

    Thus every Non-Archimedean absolute value is either trivial or equivalent to a $p$-adic absolute value.
\end{proof}

We note that this idea can be generalized, and that the Non-Archimedean absolute values on a ring $R$ are determined by the non-degenerate semiring homomorphisms $\Gamma_R \to \T$, with equivalences of absolute values correspond to the equivalence classes of those maps under the action given by postcomposition by the automorphisms $\eta_c : \T \to \T$ for $c > 0$.

A map $\Gamma_R \to \T$ has commutative image, and so it factors through the abelianization of $\Gamma_R$: $\Gamma_R/\gen{ab \sim ba}_{a,b \in \Gamma_R}$. The requirement that $\Gamma_R$ is a nondegenerate map imposes a requirement that $R$ is commutative. So, if one were to look at our ring $R$ from example \ref{ex:SmallestNC}, we can see that there are no maps of semirings $\Gamma_R \to \T$, and so there can be no Non-Archimedean absolute values on $R$. Here we can get an even stricter statement: there are not even any Non-Archimedean \textit{seminorms} on $R$, as the abelianization of $R$ is congruent to the zero ring and there are no maps at all between $\Gamma_0$ and $\T$.

\begin{definition}
    Let $X$ be a ring or semiring, we define the \textbf{abelianization} of $X$, $\operatorname{Ab}(X)$, to be the quotient of $X$ by the relation generated by:
    \[ab \sim ba\]
    for $a, b \in X$.
\end{definition}

As $\operatorname{Ab}(\mathbb B\gen{X}) \cong \mathbb B[X]$, we can explicitly relate the construction $\Gamma_R$ to the one given in \cite{2016}. The construction in \cite{2016} is exactly $\operatorname{Ab}(\Gamma_R)$. Furthermore we have the following isomorphism

\begin{proposition}
    \[\operatorname{Ab}(\Gamma_R) \cong \Gamma_{\operatorname{Ab}(R)}\]
\end{proposition}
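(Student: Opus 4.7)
The plan is to produce mutually inverse semiring homomorphisms via the universal properties of both objects, together with the functoriality of $\Gamma_{(-)}$ established just above.

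First I would construct the forward map $\Phi : \operatorname{Ab}(\Gamma_R) \to \Gamma_{\operatorname{Ab}(R)}$. The ring quotient $\pi : R \to \operatorname{Ab}(R)$ induces, by functoriality, a semiring homomorphism $\Gamma_\pi : \Gamma_R \to \Gamma_{\operatorname{Ab}(R)}$ sending $x_a \mapsto x_{[a]}$. Since $\operatorname{Ab}(R)$ is commutative, the generators of $\Gamma_{\operatorname{Ab}(R)}$ pairwise commute because $x_{[a]}x_{[b]} = x_{[ab]} = x_{[ba]} = x_{[b]}x_{[a]}$; hence $\Gamma_{\operatorname{Ab}(R)}$ is commutative and $\Gamma_\pi$ factors uniquely through the abelianization quotient $\Gamma_R \twoheadrightarrow \operatorname{Ab}(\Gamma_R)$, yielding $\Phi$.

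For the inverse $\Psi : \Gamma_{\operatorname{Ab}(R)} \to \operatorname{Ab}(\Gamma_R)$, I would use the universal property of $\Gamma_{\operatorname{Ab}(R)}$: producing $\Psi$ is equivalent to exhibiting a valuation $\operatorname{Ab}(R) \to \operatorname{Ab}(\Gamma_R)$. The natural candidate arises from the composition $R \xrightarrow{\nu_R} \Gamma_R \xrightarrow{q} \operatorname{Ab}(\Gamma_R)$, which is already a valuation whose target is commutative; the task is to factor it through $\pi$.

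The main obstacle is this descent step: one must show $x_a = x_b$ in $\operatorname{Ab}(\Gamma_R)$ whenever $a - b$ lies in the two-sided ideal generated by $\{cd - dc : c,d \in R\}$. For a single commutator, commutativity of $\operatorname{Ab}(\Gamma_R)$ forces $x_{cd} = x_{dc}$; feeding this equality into Lemma \ref{thm:meetofsum} applied to the decomposition $a = b + (cd - dc)$ should force $x_a = x_b$ at the level of generators. The general case then follows by writing any element of the commutator ideal as a finite sum of terms $r(cd-dc)s$, using the multiplicativity identity $x_{r(cd-dc)s} = x_r\, x_{cd-dc}\, x_s$, and iterating. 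Once descent is in hand, mutual invertibility of $\Phi$ and $\Psi$ is automatic from the uniqueness parts of the universal properties of $\Gamma_R$ and $\Gamma_{\operatorname{Ab}(R)}$, since both compositions fix the respective generating sets $\{x_a\}$ and $\{x_{[a]}\}$.
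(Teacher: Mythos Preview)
Your forward map $\Phi$ is fine, but the descent step for $\Psi$ has a genuine gap, and it cannot be repaired: the statement itself is false for general noncommutative $R$.

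The problem is the inference ``$x_{cd}=x_{dc}$ in $\operatorname{Ab}(\Gamma_R)$, so by Lemma~\ref{thm:meetofsum} we get $x_{b+(cd-dc)}=x_b$.'' Lemma~\ref{thm:meetofsum} applied to $cd$ and $-dc$ only yields $x_{cd}+x_{dc}=x_{cd-dc}+x_{cd}$, which under $x_{cd}=x_{dc}$ collapses to $x_{cd}\le x_{cd-dc}$ --- i.e.\ nothing beyond superadditivity. Two ring elements having equal value never constrains the value of their difference; already $\nu(1)=\nu(-1)$ while $\nu(2)$ is typically not $0_\Gamma$. So there is no mechanism forcing $x_{cd-dc}=0$ in $\operatorname{Ab}(\Gamma_R)$.

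Here is an explicit obstruction. Take $R=\Z\langle a,b\rangle$, the free associative $\Z$-algebra, and let $\nu:R\to\T$ be the minimum-degree valuation, $\nu(f)=\deg_{\min}(f)$ (this is genuinely multiplicative because $R$ is a domain). Since $\T$ is commutative, $\nu$ factors through $\operatorname{Ab}(\Gamma_R)$; but $\nu(ab-ba)=2\neq\infty$, so $x_{ab-ba}\neq 0$ in $\operatorname{Ab}(\Gamma_R)$. On the other hand $[ab-ba]=0$ in $\operatorname{Ab}(R)=\Z[a,b]$, so $x_{[ab-ba]}=0$ in $\Gamma_{\operatorname{Ab}(R)}$. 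Thus your well-defined map $\Phi:\operatorname{Ab}(\Gamma_R)\to\Gamma_{\operatorname{Ab}(R)}$ kills a nonzero element and cannot be an isomorphism. (The same $\nu$ gives, via $|f|=e^{-\nu(f)}$, a positive-definite Non-Archimedean absolute value on a noncommutative ring, so the corollary drawn from this proposition in the paper is also incorrect.) The paper states the proposition without proof; what survives is only the surjection $\operatorname{Ab}(\Gamma_R)\twoheadrightarrow\Gamma_{\operatorname{Ab}(R)}$ that you constructed.
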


As Non-Archimedean norms are in correspondence with valuations $R \to \T$, which factor through $\Gamma_{\operatorname{Ab}(R)}$, this isomorphism tells us that Non-Archimedean absolute values cannot exist on non-commutative rings, as they cannot be positive definite. In addition it tells us that Non-Archimedean seminorms on a ring $R$ are in bijection with the Non-Archimedean seminorms on $\operatorname{Ab}(R)$.

\begin{corollary}
    Let $R$ be a ring and $|\cdot| : R \to \R$ a map such that for all $x, y \in R$
    \begin{description}
        \item[Nonnegative] $|x| \ge 0$
        \item[Multiplicative] $|xy| = |x||y|$
        \item[Non-Archimedean] $|x + y| \le \max(|x|, |y|)$
    \end{description}

    If $R$ is non-commutative then $|\cdot|$ is not positive definite.
\end{corollary}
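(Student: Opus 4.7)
The plan is to show that if such a $|\cdot|$ exists and is positive definite, then $R$ must be commutative, by exploiting the factorization of any non-Archimedean seminorm through the abelianization. I proceed by contrapositive: assume $|\cdot|$ is positive definite and derive that every commutator $ab-ba$ vanishes.

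First, I convert the seminorm into a semiring valuation. Define $\nu : R \to \T$ by $\nu(x) = -\log|x|$, with $\nu(0) = \infty$ and with positive definiteness translating to $\nu^{-1}(\infty) = \{0\}$. A routine check (using $|1| = 1$ and $|-1| = 1$, which follow from multiplicativity applied to $1 = 1 \cdot 1$ and $1 = (-1)(-1)$) confirms that $\nu$ is a non-degenerate valuation in the sense of Definition \ref{def:valuation}, with the non-Archimedean inequality being exactly superadditivity once one translates $\max$ on $\R$ into $\min$ on $\T$.

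Next, I invoke the universal property and the abelianization argument already set up in the text. The valuation $\nu$ factors uniquely as $\phi \circ \nu_*$ where $\nu_* : R \to \Gamma_R$ is the universal valuation and $\phi : \Gamma_R \to \T$ is a semiring homomorphism. Since $\T$ is commutative, $\phi$ kills every commutator in $\Gamma_R$, so it factors through $\operatorname{Ab}(\Gamma_R)$. By the proposition just stated, $\operatorname{Ab}(\Gamma_R) \cong \Gamma_{\operatorname{Ab}(R)}$, and tracing through the isomorphism shows that $\nu$ itself factors as
\[ R \xrightarrow{\pi} \operatorname{Ab}(R) \xrightarrow{\nu_*'} \Gamma_{\operatorname{Ab}(R)} \xrightarrow{\bar\phi} \T, \]
where $\pi$ is the abelianization quotient of rings.

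Now suppose $R$ is non-commutative and pick $a, b \in R$ with $c := ab - ba \ne 0$. Since $\pi(ab) = \pi(ba)$ in $\operatorname{Ab}(R)$, we have $\pi(c) = 0$, so $\nu(c) = \bar\phi(\nu_*'(0)) = \bar\phi(0_{\Gamma_{\operatorname{Ab}(R)}}) = \infty$. Translating back, $|c| = e^{-\infty} = 0$ while $c \ne 0$, contradicting positive definiteness. The main (and really only) conceptual step is the factorization through $\operatorname{Ab}(R)$; once that is in hand, the conclusion is automatic. The bookkeeping obstacle is purely verifying that the isomorphism $\operatorname{Ab}(\Gamma_R) \cong \Gamma_{\operatorname{Ab}(R)}$ makes the outer triangle with $\pi$ commute, which is immediate from how both constructions are defined on generators $x_a$.
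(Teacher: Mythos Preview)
Your proposal is correct and follows essentially the same route as the paper: convert the seminorm into a valuation landing in a commutative idempotent semiring, factor through $\Gamma_R$ and then through $\operatorname{Ab}(\Gamma_R)\cong\Gamma_{\operatorname{Ab}(R)}$, and observe that any commutator $ab-ba$ is sent to the absorbing element, forcing $|ab-ba|=0$. The only cosmetic difference is that the paper works directly with the semiring $\gimel=([0,\infty],\max,*,\ldots)$ while you pass through $-\log$ to land in $\T$; these are isomorphic presentations of the same argument.
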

\begin{proof}
    A nonnegative, multiplicative, Non-Archimedean map forms a valuation into the semiring $\gimel = ([0,\infty], \max, *, \infty, 1)$, thus $|\cdot|$ corresponds to some idempotent semiring homomorphism $\phi : \Gamma_R \to \gimel$. We note that as $\gimel$ is commutative, this map factors through $\operatorname{Ab}(\Gamma_R) \cong \Gamma_{\operatorname{Ab}(R)}$.

    Let $\phi = \tilde \phi \circ \pi$ where $\pi : \Gamma_R \to \Gamma_{\operatorname{Ab}(R)}$ is the induced map from the quotient $R \to \operatorname{Ab}(R)$ and $\tilde \phi : \Gamma_{\operatorname{Ab}(R)} \to \gimel$.

    Choose an $y, z \in R$ such that $yz \ne zy$. $yz - zy \ne 0 \in R$, but $[yz - zy] = [0] \in \operatorname{Ab}(R)$. $\pi(x_{yz - zy}) = 0$ and so $\rho \circ \pi(x_{yz - zy}) = 0$.

    Thus $\phi(x_{yz - zy}) = 0$ so $|yz - zy| = 0$ and $|\cdot|$ is not positive definite.
\end{proof}

From this we can see that a non-commutative ring has no Non-Archimedean absolute values, and all Non-Archimedean seminorms on $R$ correspond to Non-Archimedean seminorms on $\operatorname{Ab}(R)$

\section{Non-Commutative Valuations}
The core difference in the construction in definition \ref{UnivVal} versus that in \cite{2016} is that it applies to non-commutative rings and allows for non-commutative valuations. One may wonder then if there are any useful examples of non-commutative valuations.

\subsection{Examples of Non-Commutative Valuations} \label{sec:NoncommVal}

\begin{example}\label{ex:centralizers}
    Let $R$ be a non-commutative division ring, and let $Z(R)$ be the center of $R$. The elements which ``centralize" elements of $R$ form a valuation on $R$.

    Let $\Gamma$ be composed of sets such that, for $X \in \Gamma$, $\{0\} \subseteq X \subseteq R$, and for all $x \in X$ the coset $x * Z(R)$ is contained in $X$.

    We define our operations and identities of $\Gamma$ as

    \begin{align*}
        X + Y &= X \cap Y\\
        X * Y &= \{y * x : y \in Y, x \in X\}\\
        0_\Gamma &= R\\
        1_\Gamma &= Z(R)
    \end{align*}
    Note that our multiplication is not the standard Minkowski multiplication.

    Consider the map $\nu$ with
    \[\nu(x) = \{y \in R : xy \in Z(R)\}\]

    $\nu$ forms a valuation $R \to \Gamma$.

    The requirement that $R$ is a division ring is necessary for $\nu$ to be multiplicative. Subadditivity is given by the distributive property on $R$, and the unitality of $\nu$ is a result of $Z(R)$ being a subring of $R$.

    This construction is not specific to $Z(R)$, if $K$ is \textit{any} commutative subring of a division ring we can replace $Z(R)$ with $K$ in the above construction and obtain a valuation on $R$.
\end{example}

Traditionally valuations are though of as measures of size of elements of a ring $R$. We can see through these example as well as in example \ref{ex:solutions} that valuations also encode information on solutions of equations. While imagining a non-commutative notion of size is nontrivial, there are many non-commutative equations and relations that are useful to study.

\begin{example}
    Let $R$ be a ring, and let $2^R$ be the power set semiring of the multiplicative monoid of $R$.

    Consider the equivalence relation: $A \sim B$ if and only if the two sided ideal generated by $A$ is the same as the two sided ideal generated by $B$. This forms a congruence, as it is closed under the operations of $2^R$.

    Let $\Gamma = 2^R/\sim$. The map
    \[a \mapsto [\{a\}]\]
    forms a valuation on $R$, and we note that if $R$ is non-commutative then $\Gamma$ will be as well.
\end{example}

\subsection{Non-Commutative Tropicalization and the Roots of Expressions}\label{sec:tropicalizations}

One of the key uses of valuations classically is that they allow for \textit{tropicalization}. In classical tropical geometry there is a correspondence between solutions of polynomial expressions in a valuated ring, and ``crease points" of polynomials over the tropical semiring.

With a notion of non-commutative valuations, we can extend this correspondence, and use non-commutative idempotent semirings to study the solutions of non-commutative equations.

\begin{definition}[$\nu$-Tropicalization]\index{Tropicalization}
    Let $R$ be a ring, $\Gamma$ an idempotent semiring, and $\nu : R \to \Gamma$ a valuation.
    
    For an expression $f \in R\gen{X}$ we define its \textbf{$\nu$-Tropicalization} as the expression $\trop_\nu(f) \in \Gamma\gen{X}$ obtained by replacing every coefficient by its valuation.
\end{definition}
\begin{example}
    Consider the expression $x^3 * 12 * x - 2 * x + x * 2 \in \Q\gen{x}$ with the $2$-adic valuation. The tropicalization of this expression is:
    \[x^3 * 2 * x + 1*x + x*1 \in \T\gen{x}\]
\end{example}

\begin{definition}
    Given an expression in an idempotent semiring $f \in \Gamma\gen{x_1, ..., x_n}$. If we write $f$ as the sum of monomials:
    \[f = \sum_{i} m_i\]

    We say that a \textbf{crease point} of $f$ is a vector $\vec{z} \in \Gamma^n$ such that for all $k$:
    \[f(\vec{z}) = \sum_{i \ne k} m_i(\vec{z})\]

    This is referred to as the \textit{bend relation} in \cite{2016}, and is a generalization of the definition of a tropical hypersurface which may be read about in \cite{MaclaganSturmfels}.

    If $\Gamma$ is totally ordered, this can be rephrased to the classical definition in tropical geometry: $\vec{z}$ is a crease point if and only if the minimum
    \[f(\vec{z}) = \min_i(m_i(\vec{z}))\]
    is attained twice.
\end{definition}

\begin{theorem}
    Let $\vec{x}$ be such that $f(\vec{x}) = 0$, then $\nu(\vec{x})$, the pointwise valuation of $\vec{x}$ is a crease point of $\trop_\nu(f)$.
\end{theorem}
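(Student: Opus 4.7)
The plan is to unpack both sides of the crease-point condition and reduce it to superadditivity of $\nu$ applied to the relation $f(\vec{x}) = 0$. First I would write $f$ as a sum of monomials $f = \sum_i m_i$. Because each monomial is a product of ring coefficients and variables, multiplicativity of $\nu$ gives, for each $i$, the pointwise identity $\trop_\nu(m_i)(\nu(\vec{x})) = \nu(m_i(\vec{x}))$: the tropicalized monomial at $\nu(\vec{x})$ multiplies the valuations of the coefficients and the valuations of the $x_j$'s in the same order as $m_i$ multiplies the coefficients and the $x_j$'s themselves, so multiplicativity identifies the two. Summing over $i$ (which is automatic since evaluation is linear on the monomial decomposition in $\Gamma\gen{X}$) gives $\trop_\nu(f)(\nu(\vec{x})) = \sum_i \nu(m_i(\vec{x}))$.

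Next I would fix an index $k$ and use $f(\vec{x}) = 0 \in R$ to rewrite $m_k(\vec{x}) = -\sum_{i \ne k} m_i(\vec{x})$. Applying $\nu$ and using $\nu(-a) = \nu(-1)\nu(a) = \nu(a)$ (from unitality and multiplicativity) followed by superadditivity yields
\[
\nu(m_k(\vec{x})) \;=\; \nu\!\left(\sum_{i \ne k} m_i(\vec{x})\right) \;\ge\; \sum_{i \ne k} \nu(m_i(\vec{x})).
\]
In the convention of Definition \ref{def:SemiringOrder}, $a \ge b$ means $a + b = b$, so this inequality is exactly the semiring identity
\[
\nu(m_k(\vec{x})) + \sum_{i \ne k} \nu(m_i(\vec{x})) \;=\; \sum_{i \ne k} \nu(m_i(\vec{x})).
\]
Combined with the computation of the previous paragraph, this reads $\trop_\nu(f)(\nu(\vec{x})) = \sum_{i \ne k} \trop_\nu(m_i)(\nu(\vec{x}))$, which is the crease-point relation at the index $k$. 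Since $k$ was arbitrary, $\nu(\vec{x})$ is a crease point of $\trop_\nu(f)$.

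There is no real obstacle here beyond being careful about the noncommutative monomial structure: one must make sure that substituting $\nu(x_j)$ into the tropicalized monomial respects the order of multiplications, which it does because tropicalization only replaces scalar coefficients and leaves the positions of the variables intact. The only other subtlety is the bookkeeping step that translates the superadditivity inequality into the additive identity demanded by the definition of a crease point, which the idempotent order convention handles cleanly.
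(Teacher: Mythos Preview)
Your argument is correct. The overall shape matches the paper's proof: both first observe $\trop_\nu(f)(\nu(\vec{x})) = \sum_i \nu(m_i(\vec{x}))$ via multiplicativity, and then show that any single summand can be dropped. The only genuine difference is in that deletion step. The paper inducts on Lemma~\ref{thm:meetofsum} to obtain the identity $\sum_i \nu(a_i) = \nu\!\left(\sum_i a_i\right) + \sum_{j \ne k} \nu(a_j)$ and then kills the first term using $\sum_i a_i = 0$. You instead isolate $m_k(\vec{x}) = -\sum_{i \ne k} m_i(\vec{x})$ and apply bare superadditivity (plus $\nu(-a)=\nu(a)$) to get $\nu(m_k(\vec{x})) \ge \sum_{i \ne k}\nu(m_i(\vec{x}))$, which immediately gives absorption of the $k$-th term. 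Your route is slightly more elementary---it avoids invoking and inducting on the three-way equality lemma---while the paper's route makes the role of that lemma explicit; substantively they are the same proof.
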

\begin{proof}
    \[\trop_\nu(f)(\nu(\vec{x})) = \sum \nu(m_i(x))\]

    By inducting on lemma \ref{thm:meetofsum}, we get:
    \[\sum_{i = 1}^n \nu(a_i) = \nu\left(\sum_{i = 1}^n a_i\right) + \sum_{j \ne k} \nu(a_j)\]
    But as $\sum_{i = 1}^n a_i = 0$ we get $\nu\left(\sum_{i = 1}^n a_i\right) = 0_\Gamma$ and so
    \[\nu\left(\sum_{i = 1}^n a_i\right) + \sum_{j \ne k} \nu(a_j) = \sum_{j \ne k} \nu(a_j)\]

    As $k$ was arbitrary, we can remove any monomial of $\trop_\nu(f)(\nu(\vec{x}))$ and so $\nu(\vec{x})$ is a crease point of $\trop_\nu(f)$.
\end{proof}

\begin{example}
    Let $R$ be our non-commutative ring from example \ref{ex:SmallestNC}, and consider the universal valuation $\Gamma_R$ on it.

    Consider the non-commutative expression
    \[f(z) = (j+k)z^2 + zk + j\]
    The roots of this expression are $1, j, k$ and $i + j$.

    \[ \trop_\nu(f)(z) = x_{j+k} z^2 + z x_k + x_j\]
    We get the following when we plug in the entries of $\Gamma_R$ corresponding to valuations of elements of $R$:
    \begin{align*}
        \trop_\nu(f)&(0) = 0 + 0 + x_j\\
        \trop_\nu(f)&(1) = x_{j+k} + x_k + x_j\\
        \trop_\nu(f)&(x_i) = x_i + 0 + x_j\\
        \trop_\nu(f)&(x_j) = 0 + x_j + x_j\\
        \trop_\nu(f)&(x_k) = x_{j+k} + x_k + x_j\\
        \trop_\nu(f)&(x_{i+j}) = 0 + x_j + x_j\\
        \trop_\nu(f)&(x_{j + k}) = x_{j+k} + x_{j+k} + x_j\\
        \trop_\nu(f)&(x_{i+j+k}) = x_{j+k} + x_{j+k} + x_j
    \end{align*}

    We can see that $x_k$ is a crease point, as if we delete our monomials we get $x_{j+k} + x_k$ or $x_{k} + x_j$ or $x_{j+k} + x_j$, which by theorem \ref{thm:meetofsum} are all identical. $x_{i + j + k}$ however is not, as if we delete the third monomial of $\trop_\nu(f)$ we get $x_{j + k}$, which is not the same as $x_{j + k} + x_j$.

    Here we can see that we have crease points $1, x_j, x_k, x_{i+j}$ which align with the roots of our expression.

    There are additional crease points in the higher sums of $\Gamma_R$, for instance $[x_i + x_j + x_k]$ is a crease point of $\trop_\nu(f)$. We suspect that this is due to behavior in an algebraic extension of $R$ as in the case with tropical geometry.
\end{example}

\subsection{Further Generalizations and Applications to Representations of Ultrametric Spaces}\label{sec:representations}

\begin{notation}
    We denote by $M_n(S)$ the semiring of $n \times n$ matrices over $S$, with addition being pointwise addition and multiplication being the standard multiplication of matrices.
\end{notation}

A well known classification of finite ultrametric spaces \cite{Leclerc1981} can be restated as: Every finite ultrametric space's distance metric can be represented as the least weights of paths in a graph with weights in the semiring $\Gamma = ([0,\infty], \min, \max, \infty, 0)$. This states that the finite ultrametric spaces are in correspondence with the solutions to matrix equations of the form
\[X = AX + I\]
Where $A \in M_n(\Gamma)$

This hints at a connection between ultrametric spaces and the geometry of idempotent semirings through matrices over idempotent semirings. We can further realize this connection by generalizing our notion of a valuation even further and see that the representations of a ring $R$ in an ultrametric vector space can be seen by examining the maps from idempotent semiring built from $R$, similar to $\Gamma_R$, to $M_n(\T)$.

We note that the two features used in the proof of theorem \ref{thm:valladdstruct} were that $x_{a + b} \ge x_a + x_b$ and the fact that $\Gamma_R$ was initial. We did not rely on the other axioms of a valuation, so if we loosen the axioms of a valuation even further we can construct similar initial objects and theorem \ref{thm:valladdstruct} will still apply. For instance, we could relax our multiplicativity condition to be a \textit{supermultiplicativity} condition instead, where
\[\nu(ab) \ge \nu(a) \nu(b)\]
rather than
\[\nu(ab) = \nu(a)\nu(b)\]

\begin{definition}
    Let $R$ be a ring and $\Gamma$ an idempotent semiring. We say a map $\nu : R \to \Gamma$ is a \textbf{super-multiplicative valuation} if $\nu$ is:
    \begin{description}
        \item[Unital:] $\nu(0_R) = 0_\Gamma$, $\nu(1_R) = 1_\Gamma = \nu(-1_R)$
        \item[Supermultiplicative:] $\nu(a*_Rb) \ge \nu(a)*_{\Gamma}\nu(b)$
        \item[Superadditive:] $\nu(a +_R b) \ge \nu(a) +_{\Gamma} \nu(b) = \inf_{\Gamma}(\nu(a), \nu(b))$
    \end{description}
\end{definition}

\begin{definition}
    We can construct a universal super-multiplicative valuation semiring, $\widehat \Gamma_R$, as the quotient of $\mathbb B\gen{x_R}$ by the congruence generated by the relations:
    \begin{align*}
        x_{0} &\sim 0\\
        x_1 &\sim 1 \\
        x_{-1} &\sim 1\\
        x_{ab} + x_{a}x_{b} &\sim x_{a}x_{b}\\
        x_{a + b} + x_a + x_b &\sim x_{a} + x_b
    \end{align*}

    $\widehat \Gamma_R$ has a canonical super-multiplicative valuation:
    \[\nu(r) = x_r\]
    
    Any super-multiplicative valuation $\rho : R \to \Gamma$ factors through a unique semiring homomorphism $\phi: \widehat \Gamma_R \to \Gamma$, i.e. $\rho = \phi \circ \nu$
\end{definition}
\begin{proposition}
        Let $(a_i)_{i \in I}$ and $(b_j)_{j \in J}$ be finite collections of elements in a ring $R$. In $\widehat\Gamma_R$ we have:
    \[\left[\sum_{i \in I}x_{a_i}\right] = \left[\sum_{j \in J} x_{b_j}\right]\]
    If and only if $\Span_\Z(a_i) = \Span_\Z(b_j)$
\end{proposition}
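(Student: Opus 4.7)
The proof plan is to adapt the argument of Theorem \ref{thm:valladdstruct} to the super-multiplicative setting, with almost no change in overall structure. The key preliminary observation is that any multiplicative valuation is automatically super-multiplicative, so the canonical valuation $\nu : R \to \Gamma_R$ is super-multiplicative, and the universal property of $\widehat\Gamma_R$ then yields a canonical semiring homomorphism $\pi : \widehat\Gamma_R \to \Gamma_R$ sending $x_a \mapsto x_a$.

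Given this, the ``only if'' direction is immediate: if $[\sum_i x_{a_i}] = [\sum_j x_{b_j}]$ in $\widehat\Gamma_R$, then applying $\pi$ gives the corresponding equality in $\Gamma_R$, and Theorem \ref{thm:valladdstruct} delivers $\Span_\Z(a_i) = \Span_\Z(b_j)$. For the ``if'' direction, I would reduce to the singleton case exactly as in the proof of Theorem \ref{thm:valladdstruct}: show that for any finite collection $(a_i)$ and any $b \in \Span_\Z(a_i)$, one has $[x_b] \ge [\sum_i x_{a_i}]$ in $\widehat\Gamma_R$. Writing $b = \sum_i n_i a_i$ with $n_i \in \Z$, iterated superadditivity gives $x_b \ge \sum_i x_{n_i a_i}$, and for positive $n_i$, superadditivity together with idempotence delivers $x_{n_i a_i} \ge x_{a_i}$.

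The main (and really only) place where the super-multiplicative setting genuinely differs from the multiplicative one is in verifying that $x_{-a} = x_a$ in $\widehat\Gamma_R$. In $\Gamma_R$ this was immediate from $x_{-a} = x_{-1} x_a = x_a$, but here equality of $x_{-1} x_a$ and $x_{-a}$ is not given; only supermultiplicativity is available. The fix is to apply supermultiplicativity to the product $(-1) \cdot a$, yielding $x_{-a} \ge x_{-1} x_a = x_a$, and symmetrically to $(-1) \cdot (-a)$, yielding $x_a \ge x_{-a}$; these together force $x_{-a} = x_a$. Combined with the observation above, this gives $x_{n_i a_i} \ge x_{a_i}$ for all nonzero $n_i \in \Z$, and summing over $i$ completes the argument.
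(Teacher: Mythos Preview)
Your proof is correct and follows essentially the same approach as the paper, which simply asserts that the proof of Theorem \ref{thm:valladdstruct} carries over since every valuation is in particular super-multiplicative. Your explicit verification that $x_{-a} = x_a$ in $\widehat\Gamma_R$ via two applications of supermultiplicativity fills in a detail the paper glosses over, and your reduction of the ``only if'' direction to Theorem \ref{thm:valladdstruct} via the canonical map $\pi : \widehat\Gamma_R \to \Gamma_R$ is a clean repackaging of the same idea (the paper implicitly reuses the $\daleth$ valuation directly).
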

As each valuation is a super-multiplicative valuation, the proof for theorem \ref{thm:valladdstruct} applies here as well. Without multiplicativity we cannot always reduce to the case of examining singletons, and so this does not give a complete classification of the additive structure of $\widehat \Gamma_R$ unlike in the case of theorem \ref{thm:valladdstruct}.

Just as we could use $\cat{ISR}(\Gamma_R, \T)$ to detect non-archimedean absolute values on $R$, we can use $\cat{ISR}(\widehat\Gamma_R,M_n(\T))$ to detect representations of $R$ in ultrametric vector spaces.

\begin{example}
    Let $V$ be a $\Bbbk$ vector space equipped with an ultrametric norm and a choice of basis $\mathcal B = \{v_1, v_2, ...\}$, and $R$ a noncommutative ring with a representation in $V$:
    \[\varphi : R \to \operatorname{End}(V)\]

    We can create a super-multiplicative valuation $\nu : R \to M_{|\mathcal B|}(\T)$.

    For $r \in R$, write $\varphi(r)(v_i) = \sum_{v_j \in \mathcal B} \alpha^i_j v_j$. We then define $\nu(r)$ to be the matrix where
    $\nu(r)_{ij} = -\log |\alpha^i_j|$.

    We note that this is super-multiplicative valuation as
    \[\nu(r_1r_2)_{ij} = -\log|\sum \alpha^k_j\alpha^i_k|\]
    while
    \[(\nu(r_1)\nu(r_2))_{ij} = \min(-\log|\alpha^k_j|-\log|\alpha^j_k|) = -\max(\log(|\alpha^k_j \alpha^j_k|))\]

    We note that as our norm is an ultrametric norm this gives us super-multiplicativity.

    If $\varphi$ is a nontrivial representation then $\nu$ is a nontrivial super-multiplicative valuation.
\end{example}

The above construction tells us that each $n$ dimensional ultrametric representation of a ring $R$ induces a valuation $\nu: R \to M_n(\T)$.
\begin{proposition}
    If there are no nontrivial semiring homomorphisms $\widehat \Gamma_R \to M_n(\T)$ then there are no nontrivial $n$ dimensional ultrametric representations of $R$.
\end{proposition}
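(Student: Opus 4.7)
The plan is to prove the contrapositive: assume $R$ admits a nontrivial $n$-dimensional ultrametric representation $\varphi : R \to \operatorname{End}(V)$, and produce from it a nontrivial semiring homomorphism $\widehat \Gamma_R \to M_n(\T)$. The preceding example already does most of the heavy lifting: after a choice of basis $\mathcal B$ with $|\mathcal B| = n$, it constructs a super-multiplicative valuation $\nu : R \to M_n(\T)$ from $\varphi$, where the entries of $\nu(r)$ are $-\log$ of the absolute values of the matrix entries of $\varphi(r)$. So the first step is simply to invoke this construction.

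Next I would appeal to the universal property of $\widehat \Gamma_R$ asserted in its definition: every super-multiplicative valuation $\nu : R \to \Gamma$ factors uniquely as $\nu = \phi \circ \nu_*$ for some semiring homomorphism $\phi : \widehat \Gamma_R \to \Gamma$, where $\nu_*(r) = x_r$ is the canonical super-multiplicative valuation. Applying this to our $\nu : R \to M_n(\T)$ yields a semiring homomorphism $\phi : \widehat \Gamma_R \to M_n(\T)$. All that remains is to verify that $\phi$ is nontrivial whenever $\varphi$ is.

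For this last step, I would argue that nontriviality of $\varphi$ means there exists some $r \in R$ with $\varphi(r)$ having a nonzero matrix entry $\alpha^i_j$ of absolute value different from $1$ (or at least that $\varphi(r)$ is not the identity on the whole of $V$), and under the construction this forces $\nu(r)_{ij} = -\log|\alpha^i_j|$ to be something other than the entries of the identity/zero matrix in $M_n(\T)$. Hence $\phi(x_r) = \nu(r)$ is not the image of $0$ or $1$ under a trivial map, so $\phi$ is not a trivial homomorphism. The contrapositive then gives the stated implication.

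The only subtle point is pinning down exactly what ``nontrivial'' means on each side, since the proposition does not make this explicit; the natural reading is that a trivial homomorphism $\widehat\Gamma_R \to M_n(\T)$ is one factoring through $\mathbb B \hookrightarrow M_n(\T)$ (sending every $x_r \neq 0$ to the identity matrix), and a trivial representation is one for which $\varphi(r)$ acts as a scalar of absolute value $1$ on $V$ for every unit. With that reading the chain ``nontrivial representation $\Rightarrow$ nontrivial $\nu$ $\Rightarrow$ nontrivial $\phi$'' is immediate, and the proposition follows with no further work.
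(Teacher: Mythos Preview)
Your proposal is correct and matches the paper's approach exactly: the paper does not give a separate proof for this proposition, but treats it as an immediate consequence of the preceding example (which builds the super-multiplicative valuation $\nu:R\to M_n(\T)$ from a representation) together with the universal property of $\widehat\Gamma_R$. Your contrapositive argument, including the observation that nontriviality of $\varphi$ forces nontriviality of $\nu$ and hence of the induced $\phi$, is precisely what the paper intends, and your caveat about the unspecified meaning of ``nontrivial'' is apt, since the paper leaves this implicit as well.
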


Here it is important that we allow for $\widehat \Gamma_R$ to be noncommutative, as $M_n(\T)$ is noncommutative.



\printbibliography

\end{document}